\documentclass{siamltex}
\usepackage{a4wide,amssymb,graphics,graphicx,textcomp,color}
\usepackage{enumerate,textcomp,multirow,amsmath}
\newtheorem{remark}[theorem]{Remark}
\newtheorem{example}[theorem]{Example}
\numberwithin{equation}{section}
\numberwithin{table}{section}
\numberwithin{figure}{section}
\newcommand {\mat}  [1] {\left[\begin{array}{#1}}
\newcommand {\rix}      {\end{array}\right]}
\newcommand{\eproof}{\space
    {\ \vbox{\hrule\hbox{\vrule height1.3ex\hskip0.8ex\vrule}\hrule}}\par}
\newif\ifMDlatex
\catcode`@=11     
\def\MD@us#1{\csname#1style\endcsname}
\def\MD@uf#1{\csname#1font\endcsname}
\def\MD@t{text}
\def\MD@s{script}
\def\MD@ss{scriptscript}
\newdimen\MD@unit
\MD@unit\p@
\def\MD@changestyle#1{
  \relax\MD@unit0.1\fontdimen6\MD@uf{#1}0
  \everymath\expandafter{\the\everymath\MD@us{#1}}
}
\def\MD@dot{$\m@th\ldotp$}
\def\MD@palette#1{\mathchoice{#1\MD@t}{#1\MD@t}{#1\MD@s}{#1\MD@ss}}
\def\MD@ddots#1{{\MD@changestyle{#1}%
  \mkern1mu\raise7\MD@unit\vbox{\kern7\MD@unit\hbox{\MD@dot}}%
  \mkern2mu\raise4\MD@unit\hbox{\MD@dot}%
  \mkern2mu\raise \MD@unit\hbox{\MD@dot}\mkern1mu}}%
\def\MD@iddots#1{{\MD@changestyle{#1}%
  \mkern1mu\raise \MD@unit\hbox{\MD@dot}%
  \mkern2mu\raise4\MD@unit\hbox{\MD@dot}%
  \mkern2mu\raise7\MD@unit\vbox{\kern7\MD@unit\hbox{\MD@dot}}}}%
\def\MD@vdots#1{\vbox{\MD@changestyle{#1}%
    \baselineskip4\MD@unit\lineskiplimit\z@
    \kern6\MD@unit\hbox{\MD@dot}\hbox{\MD@dot}\hbox{\MD@dot}}}%
\ifMDlatex
  \DeclareRobustCommand\ddots{\mathinner{\MD@palette\MD@ddots}}%
  \DeclareRobustCommand\iddots{\mathinner{\MD@palette\MD@iddots}}%
  \DeclareRobustCommand\vdots{\mathinner{\MD@palette\MD@vdots}}%
\else
  \def\ddots{\mathinner{\MD@palette\MD@ddots}}%
  \def\iddots{\mathinner{\MD@palette\MD@iddots}}%
  \def\vdots{\mathinner{\MD@palette\MD@vdots}}%
\fi
\catcode`@=12    
\newcommand{\adots}{\iddots}

\begin{document}

\title{Low-rank perturbation of symplectic matrices}

\author{Christian Mehl\thanks{Institut f\"ur Mathematik, Technische Universit\"at Berlin, Sekretariat MA 4-4, Fasanenstra{\ss}e 89,
10623 Berlin, Germany ({\tt  mehl@math.tu-berlin.de})}
}

\maketitle

\begin{abstract}
In this paper the generic behavior of the Jordan structure of symplectic matrices under low-rank structure-preserving transformations
is considered. So far, only the case of rank-one perturbations was covered in the literature, and this result is now extended to
the case of arbitrary rank $k$. As a key step toward the main result, it is shown that all symplectic matrices that differ from the
identity by rank $k$ can be represented as a product of $k$ symplectic reflectors unless they are similar to a diagonalizable matrix
having only the eigenvalues $\pm 1$.
\end{abstract}

\section{Introduction}

In \cite{HorM94} the change in Jordan structure of complex matrices and compact operators was studied under perturbations that are low in rank,
but not necessarily small in norm. The main result was later rediscovered independently in \cite{MorD03} and \cite{Sav03,Sav04} for the matrix case and initiated a new branch of
research on low-rank perturbations of matrices, matrix pencils and matrix polynomials, sometimes including the real or quaternionic cases, see, e.g., \cite{DeD07,DeD09,DeD16,DeMD08,MMRR4,MR17}. While some results consider the possible behavior under arbitrary perturbations, see also
\cite{BarR19,BarR20} and for generalizations to linear operators and linear relations also \cite{BehLMT15,LebMPTW21}, the main focus
in the aforementioned papers is on generic perturbations, because in that case there is a unique behavior in the change of the Jordan
structure that can be described as follows: if in the Jordan canonical form of the considered matrix there are $g$ Jordan blocks associated
with a particular eigenvalue, then after a generic perturbation of rank $k$, the $k$ largest Jordan blocks will disappear from
the Jordan canonical form of the perturbed matrix while the smallest $g-k$ Jordan blocks will remain. 
(The considered eigenvalue will not be an eigenvalue of the perturbed matrix if $g\leq k$.)

Starting with \cite{MMRR2}, the corresponding problem for matrices and matrix pencils with symmetry structures has been considered, 
see, e.g., \cite{BatMRR16,DeMM17,FouGJR13,Jan12}. At first, the main focus was on the rank-one case, but it was later shown in \cite{BatMRR16} how the results can be generalized to structure-preserving perturbations of arbitrary rank.
For many of these structure, it has been observed that in some situations the behavior under generic structure-preserving perturbations  
differs considerably from the behavior under generic perturbations that ignore the structure.

The case of structure-preserving rank-one perturbations of real and complex symplectic matrices has been considered in \cite{MMRR5}, 
but in contrast to other structures, the corresponding results have not yet been generalized to structure-preserving perturbations 
of higher rank. The reason for this gap in the literature 
is the fact that structure-preserving perturbations of symplectic matrices have to be considered as multiplicative
rather than additive perturbations, because of the group structure of the set of symplectic matrices. This is in contrast to the main result 
in \cite{BatMRR16} that explicitly exploits the viewpoint of additive perturbations by representing a structure-preserving perturbation 
of rank $k$ as a sum of $k$ rank-one perturbations.

Thus, in order to develop a theory of structure-preserving rank-$k$ perturbations of symplectic matrices in the spirit of \cite{BatMRR16}, 
it is necessary to represent a symplectic rank-$k$ perturbation as a product of exactly $k$ symplectic rank-one perturbations.
Here, a \emph{symplectic rank-$k$ perturbation} is understood as a symplectic matrix that differs from the identity by a rank-$k$ matrix.
It was shown in \cite{MMRR5} that any symplectic rank-one perturbation can be represented as a \emph{symplectic reflector}, see
Definition~\ref{def:sr} for the exact introduction of this term. Thus, the question raised above can be reformulated as the problem
whether each symplectic matrix that differs from the identity by a rank-$k$ matrix can be written as a product of $k$ symplectic reflectors.
As it turns out, the answer is negative in general, but in 1955 Dieudonn\'{e} (\cite{Die55}, see also \cite{OMe78}) gave a necessary and
sufficient condition when the answer is affirmative. 
The results and proofs given in \cite{Die55} and \cite{OMe78} cover arbitrary fields of characteristic different from
two and are aimed at a readership specialized in algebra. However, it seems that they are not easily comprehensible for
a reader interested in matrix perturbation theory. Therefore, an alternative equivalent condition is provided in this paper
accompanied with a proof based on canonical forms. The author believes that the reformulated result and the alternative proof highlight 
the connection of the result to matrix perturbation theory and in particular allow us to explain why the equivalent condition is exactly 
how it is, thereby leading to a better understanding of the underlying theory.
Based on this result, it is then shown that the set of symplectic rank-$k$ perturbations
that can be represented as a product of $k$ reflectors is generic. This finally allows to generalize the result on the generic behavior of
symplectic matrices under structure-preserving rank-one perturbations to the case of structure-preserving perturbations of rank-$k$.

The remainder of the paper is organized as follows. Section 2 provides preliminary results and reviews results from the literature that
are needed in the following. Section 3 considers products of symplectic reflectors and presents an alternative proof of Dieudonn\'{e}'s
result while Section~4 contains the main perturbation theoretical results of the paper concerning the generic behavior of the Jordan
structure of a symplectic matrix under structure-preserving low-rank perturbations. 

\section{Preliminaries}

In this section we will introduce the relevant notation and definitions and review the canonical form for symplectic matrices as
well as the result on generic structure-preserving rank-one perturbations from \cite{MMRR5}.
$\mathbb F^{n,m}$ stands for the set of $n\times m$ matrices over the field $\mathbb F$, where $\mathbb F\in\{\mathbb R,\mathbb C\}$.
If the matrix $M\in\mathbb F^{m,m}$ is invertible, then we use $M^{-\top}$ as an abbreviation for $(M^\top)^{-1}=(M^{-1})^\top$.
A set $\mathcal A\subseteq\mathbb F^m$ is called \emph{algebraic} if it is the set of common zeros of finitely many 
polynomials over $\mathbb F$ in $m$ variables. An algebraic set $\mathcal A\subseteq\mathbb F^m$ is called \emph{proper} if
$\mathcal A\neq\mathbb F^m$. (This is equivalent to saying that the finitely many polynomials from the definition of an algebraic
set are not all zero.) Finally, a set $\mathcal G\subseteq\mathbb F^m$ is called \emph{generic} if its complement is contained in
a proper algebraic set and a set $\mathcal G\subseteq\mathbb F^{m,n}$ is called \emph{generic} if it is generic when canonically
identified with a subset of $\mathbb F^{mn}$. Next we introduce symplectic matrices where we use a slightly more general notion.

\begin{definition}
Let $J\in\mathbb F^{2n,2n}$ be an invertible skew-symmetric matrix. Then a matrix $S\in\mathbb F^{2n,2n}$ is called \emph{$J$-symplectic} if $S^\top JS=J$.
\end{definition}

If $J$ is the particular matrix
\begin{equation}\label{eq:J}
J=\mat{cc} 0&I_n\\ -I_n&0\rix
\end{equation}
then a $J$-symplectic matrix is simply called \emph{symplectic}. In this paper, we will however consider the more general notion of
$J$-symplectic matrices, because this will allow us to transform the pair $(S,J)$ to a canonical form from which the Jordan 
structure of $S$ can be easily read off. This form is obtained under the following equivalence relation on
the set of $m\times m$ matrix pairs:
Two pairs of matrices $(S,J)$ and $(\widetilde S,\widetilde J)\in (\mathbb F^{m,m})^2$
are called \emph{equivalent} if there exists a nonsingular matrix $P$ such that
\[
\widetilde S=P^{-1}SP\quad\mbox{and}\quad \widetilde J=P^\top JP.
\]
It can be easily verified that for the case $m=2n$ and $J,\widetilde J$ being skew-symmetric and invertible we have that
$\widetilde S$ is $\widetilde J$-symplectic if and only if $S$ is $J$-symplectic, i.e. the symplectic structure is
preserved under such equivalence transformations. This becomes even more clear by interpreting $P$ as a transformation matrix corresponding 
to a change of basis in a space with an alternating inner product. While for the linear map represented by $S$ such a basis change will result
in a similarity transformation on $S$, the corresponding effect on the matrix $J$ representing the alternating inner product in the given 
space will be a congruence transformation. Thus, the equivalence transformation only changes the particular
representations of the matrices, but not their relation within the geometry of the space. 

A $J$-symplectic matrix $S$ is called
\emph{$J$-indecomposable} if it is not equivalent to a direct sum of two nontrivial symplectic matrices
of smaller size. To be precise, if $S$ is $J$-indecomposable and $P$ is nonsingular such that
\[
P^{-1}SP=S_1\oplus S_2\quad\mbox{and}\quad P^\top JP=J_1\oplus J_2,
\]
where $S_i$ and $J_i$ have the same size for $i=1,2$, then either $(S_1,J_1)$ or $(S_2,J_2)$ is void.

The following canonical forms were presented in \cite{Mehl1} and \cite{Mehl2}, respectively, and they have been
slightly adapted here to match the notation used in this paper. In the following, $T(a_1,\dots,a_m)$ denotes the upper
triangular $m\times m$ Toeplitz matrix whose first row is given by $\mat{ccc}a_1&\dots&a_m\rix$,
while $\Sigma_m$ is the $m\times m$ anti-diagonal matrix with anti-diagonal entries that
alternate between $1$ and $-1$ starting with $1$ in the first row, i.e.,
\[
T(a_1,\dots,a_m)=\mat{cccc} a_1&a_2&\dots&a_m\\ 0&\ddots&\ddots&\vdots\\
\vdots & \ddots &\ddots & a_2\\ 0&\dots&0&a_1\rix\quad\mbox{and}\quad
\Sigma_m=\mat{cccc}0&\dots&0&1\\[-1ex] \vdots & \adots & -1 &0\\[-1ex] 0& \adots &\adots &\vdots\\
\!\!(-1)^{m-1}\!\!\!\! & 0&\dots&0\rix.
\]
Moreover, $J_m(\lambda_0)$ denotes the standard upper triangular Jordan block of size $m\times m$ associated with
the eigenvalue $\lambda_0$ and $J_m(\alpha,\beta)$ stands for the upper triangular real Jordan block of size $2m\times 2m$
associated with a pair of conjugate complex eigenvalues $\alpha\pm i\beta$.

\begin{theorem}[Canonical form for complex $J$-symplectic matrices]\label{cbilkunithm}
Let $J\in\mathbb C^{2n, 2n}$ be skew-symmetric and invertible and $S\in\mathbb C^{2n, 2n}$ be
$J$-symplectic. Then there exists a nonsingular matrix $P\in\mathbb C^{2n, 2n}$ such that
\begin{equation}\label{cbilkuni}
P^{-1}SP=S_1\oplus\cdots\oplus S_p,\quad
P^\top JP=J_1\oplus\cdots\oplus J_p,
\end{equation}
where $S_j$ is $J_j$-indecomposable and $S_j$ and $J_j$ have one of the following forms for $j=1,\dots,p$:
\begin{itemize}
\item[i)] even-sized blocks associated with $\lambda_j=\delta=\pm 1$, where $m_j\in\mathbb N$ is even:
\begin{equation}\label{cbilkuni1}
S_j=T(\delta,1,r_2,\dots,r_{m_j-1}),\quad J_j=\Sigma_{m_j},
\end{equation}
Moreover, $r_{k}=0$ for odd $k$ and the parameters $r_k$
for even $k$ are real and uniquely determined by the recursive formula
\begin{equation}\label{recformulaorthskew}
r_2=\frac{1}{2}\delta,\quad
r_{k}=-\frac{1}{2}\delta\left(\sum_{\nu=1}^{\frac{k}{2}-1}r_{2\cdot \nu}r_{2\cdot(\frac{k}{2}-\nu)}\right),
\quad 4\leq k\leq m_j;
\end{equation}
\item[ii)] paired blocks associated with $\lambda_j=\pm 1$, where $m_j\in\mathbb N$ is odd:
\begin{equation}\label{cbilkuni2}
S_j=\mat{cc}\mathcal J_{m_j}(\lambda_j)&0\\ 0& \big(\mathcal J_{m_j}(\lambda_j)\big)^{-\top}\rix,\quad
J_j=\mat{cc} 0 & I_{m_j}\\ -I_{m_j} & 0\rix;
\end{equation}
\item[iii)] blocks associated with a pair $(\lambda_j,\lambda_j^{-1})\in\mathbb C\times\mathbb C$,
satisfying $\operatorname{Re}(\lambda_j)>\operatorname{Re}(\lambda_j^{-1})$, or
$\operatorname{Im}(\lambda_j)>\operatorname{Im}(\lambda_j^{-1})$ if $\operatorname{Re}(\lambda_j)=\operatorname{Re}(\lambda_j^{-1})$,
where $m_j\in\mathbb N$:
\begin{equation}\label{cbilkuni3}
S_j=\mat{cc}\mathcal J_{m_j}(\lambda_j)&0\\ 0& \big(\mathcal J_{m_j}(\lambda_j)\big)^{-\top}\rix,\quad
J_j=\mat{cc} 0 & I_{m_j}\\ -I_{m_j} & 0\rix.
\end{equation}
\end{itemize}
The values $\lambda_j$ and $m_j$ may be different for different values of $j$.
Moreover, the form~\eqref{cbilkuni} is unique up to the permutation of blocks.
\end{theorem}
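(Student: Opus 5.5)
The proof reduces the pair $(S,J)$ to a direct sum of pieces, one for each eigenvalue or reciprocal pair, and then treats each piece with a known normal form. It imitates the classical derivation of canonical forms for matrices that are selfadjoint or unitary with respect to an indefinite form, since this theorem is taken from \cite{Mehl1}. Throughout we read $\mathcal J_{m}(\lambda)$ as the Jordan block $J_m(\lambda)$.

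\textbf{Step 1 (splitting into root subspaces).} From $S^\top JS=J$ we get $J S J^{-1} = S^{-\top}$. Hence $S$ is similar to $S^{-1}$, and its spectrum is closed under $\lambda\mapsto\lambda^{-1}$. For eigenvalues $\lambda,\mu$ with $\lambda\mu\neq 1$, the root subspaces $\mathcal R_\lambda$ and $\mathcal R_\mu$ are $J$-orthogonal. To see this, take $x\in\ker(S-\lambda I)^a$ and $y\in\ker(S-\mu I)^b$ and induct on $a+b$, using
\[
x^\top J y = (Sx)^\top J(Sy).
\]
Therefore $\mathbb C^{2n}$ splits $J$-orthogonally into the subspaces $\mathcal R_\lambda\dotplus\mathcal R_{\lambda^{-1}}$ for $\lambda\neq\pm1$, together with $\mathcal R_{1}$ and $\mathcal R_{-1}$. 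Since $J$ is nondegenerate, its restriction to each summand is nondegenerate. A basis adapted to this decomposition gives a first block-diagonal reduction of $(S,J)$.

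\textbf{Step 2 (the case $\lambda\neq\pm1$).} Here $J$ pairs $\mathcal R_\lambda$ nondegenerately with $\mathcal R_{\lambda^{-1}}$, and each of these is $J$-isotropic. Choose a Jordan basis $X$ of $\mathcal R_\lambda$ with $S X= X\,\mathcal J(\lambda)$. Then take the dual basis $Y$ of $\mathcal R_{\lambda^{-1}}$ defined by $X^\top J Y=I$. The symplectic relation forces $S Y=Y\mathcal J(\lambda)^{-\top}$. Splitting $\mathcal J(\lambda)$ into its Jordan blocks yields the blocks in iii). The ordering condition on $\lambda_j$ only fixes which member of the pair $\lambda,\lambda^{-1}$ is called $\lambda_j$.

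\textbf{Step 3 (the case $\lambda=\delta=\pm1$).} This is the main obstacle. On $\mathcal R_\delta$ set $N=S-\delta I$, which is nilpotent. Consider the bilinear forms
\[
[x,y]_k=x^\top J N^{k} y .
\]
The key identity $N^\top J = -J N S^{-1}$ is a rewriting of $S^\top J S=J$. It means that $N$ is skew-adjoint with respect to $J$ up to the invertible factor $S^{-1}$, which commutes with $N$.

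I would treat the Cayley transform $K=(S-I)(S+I)^{-1}$ for $\delta=1$, and the analogous transform for $\delta=-1$. This $K$ is nilpotent on $\mathcal R_\delta$ and $J$-skew-Hamiltonian in the sense that $JK$ is symmetric. The known canonical form for such pairs then gives two kinds of pieces:
\begin{itemize}
\item single even-sized blocks with $J$ of sign-alternating antidiagonal type, where over $\mathbb C$ the sign can be absorbed by scaling;
\item odd-sized blocks, which must occur in pairs, because the restriction of an alternating form to the span of an odd block would be degenerate.
\end{itemize}
Transforming back yields ii) directly. For i), the transformed single block must be brought to exactly $T(\delta,1,r_2,\dots)$ with $J=\Sigma_m$. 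I would do this by a Toeplitz change of basis $P$ that commutes with the Jordan structure. The condition $S^\top\Sigma S=\Sigma$ applied to an upper triangular Toeplitz $S$ then becomes the series equation $s(z)s(-z)=1$ for the generating function $s(z)=\delta+z+r_2z^2+\cdots$. This equation forces the odd coefficients to vanish and gives the recursion \eqref{recformulaorthskew}, which shows that the normalized $r_k$ are uniquely determined.

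\textbf{Step 4 (uniqueness).} The Jordan structure of $S$ is a similarity invariant. The only remaining freedom is whether each block of size $m$ at $\pm1$ is of type i) or ii). This is determined by parity: even sizes must be single blocks and odd sizes must be paired. Since all parameters are already fixed, the form is unique up to the order of the blocks.
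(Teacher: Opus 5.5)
The paper gives no proof of this theorem; it quotes the result from the literature. Your argument therefore has to stand on its own. Steps 1, 2 and 4 are sound, but Step 3 has a genuine gap exactly at the normalization of the type i) blocks.

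A Toeplitz change of basis $P=p(N)$ commutes with $S=s(N)$, so $P^{-1}SP=S$. It only changes $J$ and cannot alter a single coefficient of $s$.

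The equation $s(z)s(-z)=1$ also does not force the odd coefficients to vanish. It is indeed equivalent to $S^\top\Sigma_mS=\Sigma_m$, because $N^\top\Sigma_m=-\Sigma_mN$ for even $m$. But writing $s=e+o$ with $e$ even and $o$ odd, it reads $e(z)^2-o(z)^2\equiv 1 \pmod{z^m}$. This determines $e$ from $o$ and leaves $o$ completely free. For example, $T(\delta,1,\frac{\delta}{2},r_3)$ is $\Sigma_4$-symplectic for every $r_3\in\mathbb C$.

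Your route does land on such blocks. For $\delta=1$, pulling the Hamiltonian block $(N,\Sigma_m)$ back through the Cayley transform gives $s(z)=\frac{1+z}{1-z}$. Its odd part $\frac{2z}{1-z^2}$ has every odd coefficient equal to $2$. Reaching $r_1=1$, $r_3=r_5=\cdots=0$ requires a $\Sigma_m$-preserving change of basis that reparametrizes $N$, and hence does not commute with it. Your proposal supplies none, so existence of the form \eqref{cbilkuni1} is not established.

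A clean repair is to use $H=\frac12(S-S^{-1})$ instead of the Cayley transform.
\begin{itemize}
\item From $S^\top J=JS^{-1}$ and $S^{-\top}J=JS$ you get $H^\top J=-JH$, so $H$ is $J$-Hamiltonian.
\item On $\mathcal R_\delta$, $H$ is nilpotent with the same Jordan structure as $S$, since the derivative of $\frac12(z-z^{-1})$ at $z=\pm1$ is $1$.
\item Since $(S-H)^2=I+H^2$ and $S-H$ has only the eigenvalue $\delta$ there, $S=H+\delta(I+H^2)^{1/2}$ with the primary square root.
\end{itemize}
A single even Hamiltonian block $(N,\pm\Sigma_m)$, with the sign absorbed by scaling with $i$, then yields $S_j=s(N)$ with $s(z)=z+\delta\sqrt{1+z^2}$. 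Its odd part is exactly $z$, and $e(z)^2=1+z^2$ gives $r_2=\frac{\delta}{2}$ and the recursion \eqref{recformulaorthskew}.

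A paired odd block $N\oplus(-N^\top)$ yields $A\oplus A^{-\top}$ with $A=s(N)$. The map $Q\oplus Q^{-\top}$, where $Q^{-1}AQ=\mathcal J_m(\delta)$, brings this to \eqref{cbilkuni2} without changing $J_j$.

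Two minor slips:
\begin{itemize}
\item $JK$ symmetric means $K$ is $J$-Hamiltonian, not skew-Hamiltonian. The skew-Hamiltonian normal form pairs every Jordan block and would contradict i).
\item The identity in Step 3 should read $N^\top J=-\delta JNS^{-1}$.
\end{itemize}
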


There is a corresponding real version of Theorem~\ref{cbilkunithm}. For its formulation, we introduce the
reverse identity $R_m$ which is the $m\times m$ matrix with ones on the anti-diagonal and zeros elsewhere.

\begin{theorem}[Canonical form for real symplectic matrices]\label{symmunitthm}
Let $J\in\mathbb R^{2n, 2n}$ be skew-symmetric and invertible and  $S\in\mathbb R^{2n,2n}$ be $J$-symplectic.
Then there exists a nonsingular matrix $P\in\mathbb R^{2n,2n}$ such that
\begin{equation}\label{symmunit}
P^{-1}SP=S_1\oplus\cdots\oplus S_p,\quad P^\top JP=J_1\oplus\cdots\oplus J_p,
\end{equation}
where $S_j$ is $J_j$-indecomposable and where $S_j$ and $J_j$ have one of the
following forms for $j=1,\dots,p$:
\begin{itemize}
\item[i)] even-sized blocks associated with $\lambda_j=\delta=\pm 1$, where $m_j\in\mathbb N$ is even:
\begin{equation}\label{symmunit1}
S_j=T(\delta,1,r_2,\dots,r_{m_j-1}),\quad J_j=\sigma_j\Sigma_{m_j}.
\end{equation}
Moreover, $\sigma_j=\pm 1$, $r_{k}=0$ for odd $k$ and the parameters $r_k$
for even $k$ are real and uniquely determined by the recursive formula~\eqref{recformulaorthskew}.
\item[ii)] paired blocks associated with $\lambda_j=\pm 1$, where $m_j\in\mathbb N$ is odd:
\begin{equation}\label{symmunit2}
S_j=\mat{cc}\mathcal J_{m_j}(\lambda_j)&0\\ 0& \big(\mathcal J_{m_j}(\lambda_j)\big)^{-\top}\rix,\quad
J_j=\mat{cc} 0 & I_{m_j}\\ - I_{m_j} & 0\rix;
\end{equation}
\item[iii)] blocks associated with a pair $(\lambda_j,\lambda_j^{-1})\in\mathbb R\times\mathbb R$,
where $\lambda_j>\lambda_j^{-1}$ and $m_j\in\mathbb N$:
\begin{equation}\label{symmunit3}
S_j=\mat{cc}\mathcal J_{m_j}(\lambda_j)&0\\ 0& \big(\mathcal J_{m_j}(\lambda_j)\big)^{-\top}\rix,\quad
J_j=\mat{cc} 0 & I_{m_j}\\ -I_{m_j} & 0\rix.
\end{equation}
\item[iv)] blocks associated with a pair $(\lambda_j,\overline{\lambda_j})$ of
nonreal unimodular eigenvalues:
\begin{equation}\label{symmunit4}\qquad
S_j=\mat{ccccc} \!\!\Lambda_j&\!\!\Theta_j&\!\!\!\!-r_2\Lambda_j\!\!\!&\!\!\dots & \!\!\!-r_{m_j-1}\Lambda_j\!\!\!\!\\
0&\!\!\ddots&\ddots&\!\!\ddots&\vdots\\[-1ex] \vdots&\!\!\ddots&\ddots&\!\!\ddots&\!\!-r_2\Lambda_j\!\!\\[-0.5ex]
\vdots&&\!\!\ddots&\!\!\ddots&\Theta_j\\ 0 & \!\!\cdots&\cdots&\!\!0&\Lambda_j\rix,\quad
J_j=\sigma_jR_j\otimes \Sigma_2.
\end{equation}
where $|\lambda_j|=1$, $\mbox{\rm Im}(\lambda_j)>0$, $m_j\in\mathbb N$, $\sigma_j=\pm 1$, and
\begin{equation}\label{phitheta}
\Lambda_j=\mat{cc}\mbox{\rm Re}(\lambda_j)&\mbox{\rm Im}(\lambda_j)\\ -\mbox{\rm Im}(\lambda_j)&\mbox{\rm Re}(\lambda_j)\rix,\quad
\Theta_j=\mat{cc}\mbox{\rm Im}(\lambda_j)&-\mbox{\rm Re}(\lambda_j)\\\mbox{\rm Re}(\lambda_j)&\mbox{\rm Im}(\lambda_j)\rix.
\end{equation}
Moreover, $r_k=0$ for odd $k$ and the parameters $r_k$
for even $k$ are real and uniquely determined by the recursive formula
\begin{equation}\label{bruuhl}
r_2=\frac{1}{2},\quad
r_{k}=\frac{1}{2}\left(\sum_{\nu=1}^{\frac{k}{2}-1}r_{2\cdot \nu}r_{2\cdot(\frac{k}{2}-\nu)}\right),
\quad 4\leq k\leq m_j;
\end{equation}
\item[v)] blocks associated with a quadruplet $(\alpha_j\pm i\beta_j,\frac{1}{\alpha_j\pm i\beta_j})$
of nonreal, nonunimodular eigenvalues:
\begin{equation}\label{symmunit5}\qquad
S_j=\mat{cc}\mathcal J_{m_j}(\alpha_j,\beta_j)& 0\\ 0 & \big(\mathcal J_{m_j}(\alpha_j,\beta_j)\big)^{-\top}\rix,
\quad J_j=\mat{cc} 0&I_{2m_j}\\ - I_{2m_j}&0\rix,
\end{equation}
where $\alpha_j\in\mathbb R$, $\beta_j>0$, $\alpha_j^2+\beta_j^2>1$, and $m_j\in\mathbb N$.
\end{itemize}
The values $\lambda_j$, $m_j$, $\sigma_j$, $\alpha_j$, and $\beta_j$ may be different for different values of $j$.
Moreover, the form~\eqref{symmunit} is unique up to the permutation of blocks.
\end{theorem}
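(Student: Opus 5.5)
This canonical form is quoted from \cite{Mehl2}, so a self-contained argument would reduce the problem to the complex form of Theorem~\ref{cbilkunithm} and to the classical theory of pairs consisting of a real matrix and a real skew-symmetric form.

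\emph{Step 1: decompose into eigenvalue classes.} For a real $J$-symplectic $S$ the spectrum is closed under $\lambda\mapsto\lambda^{-1}$ and under complex conjugation. Let $V_\Lambda$ be the real generalized eigenspace for an orbit $\Lambda$ of the group generated by these two maps. The orbits are $\{1\}$, $\{-1\}$, $\{\lambda,\lambda^{-1}\}$ with $\lambda$ real, $\{\lambda,\bar\lambda\}$ with $|\lambda|=1$, and quadruplets. From $S^\top J S=J$ we get $x^\top J y=0$ whenever $x$ and $y$ lie in generalized eigenspaces for $\mu,\nu$ with $\mu\nu\neq1$; this is proved by induction on the nilpotency index. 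Hence the spaces $V_\Lambda$ are $J$-orthogonal, and each restriction of $J$ is nondegenerate. A real change of basis therefore splits $(S,J)$ into a direct sum over the classes.

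\emph{Step 2: the non-unimodular classes.} When $\Lambda$ is $\{\lambda,\lambda^{-1}\}$ with $\lambda\neq\pm1$ real, or a quadruplet, $V_\Lambda$ splits into $J$-isotropic subspaces $V_+$ (eigenvalues $\lambda$, or $\alpha\pm i\beta$) and $V_-$ (their reciprocals). These are dual to each other under $J$. Choose a basis of $V_+$ bringing $S|_{V_+}$ to real Jordan form, and take the $J$-dual basis of $V_-$. This gives the forms~\eqref{symmunit3} and~\eqref{symmunit5}.

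\emph{Step 3: the unimodular classes $\pm1$ and $\{\lambda,\bar\lambda\}$.} This is the core step. I would pass to the Cayley transform $H=(I-S)(I+S)^{-1}$, or work directly with the nilpotent part. Equivalently, I would use the sesquilinear or bilinear form $[x,y]=x^\top J(S-S^{-1})^{k}y$ on $\ker N^{k+1}/\ker N^{k}$, where $N$ is the nilpotent part. As in the classical treatment of real pairs (Gohberg--Lancaster--Rodman, or Thompson), this induced form is symmetric or skew depending on the parity of the block size and on the eigenvalue. For $\lambda=\pm1$ and odd block size $m$ it is skew, so blocks must pair up, giving~\eqref{symmunit2}. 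For even $m$ it is symmetric and real, which produces one sign $\sigma_j$ per block (Sylvester's law of inertia); this is case i). For $\lambda$ unimodular nonreal it is Hermitian after complexification, which again produces a sign; this is case iv). Splitting off the blocks one by one from the top degree down yields the block diagonal structure. The sign characteristic is an invariant, so uniqueness follows, together with uniqueness of the Jordan form.

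\emph{Step 4: normalize each indecomposable block.} Inside an indecomposable block I would look for a polynomial change of basis $P=T(1,p_1,\dots)$ commuting with the shift structure, so that $J$ becomes exactly $\sigma\Sigma_m$ (respectively $\sigma R\otimes\Sigma_2$). The symplectic condition $S^\top\Sigma_m S=\Sigma_m$ with $S=T(\delta,1,r_2,\dots)$ then reduces coefficient by coefficient to the recursions~\eqref{recformulaorthskew} and~\eqref{bruuhl}. Odd-indexed coefficients vanish by the parity symmetry of $\Sigma_m$, and each even-indexed coefficient is forced uniquely, so the parameters are uniquely determined.

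I expect this coefficient matching, and in particular the real $2\times2$ block version in case iv), to be the main obstacle. I would handle it by complexifying and reducing to the computation already done for Theorem~\ref{cbilkunithm}.
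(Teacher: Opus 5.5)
The paper does not prove this theorem; it quotes it from \cite{Mehl2}, so your sketch has to stand on its own. Steps 1--3 follow the standard route and are fine in outline.

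\textbf{The gap is in Step 4.} The claim that the odd-indexed coefficients vanish ``by the parity symmetry of $\Sigma_m$'' is false. Write an upper triangular Toeplitz matrix as $S=g(N)$ with $N=\mathcal J_m(0)$. Since $N^\top\Sigma_m=-\Sigma_mN$, you get $S^\top\Sigma_mS=\Sigma_m\,g(-N)g(N)$, so the symplectic condition is $g(-x)g(x)\equiv 1 \pmod{x^m}$. Splitting $g(x)=e(x^2)+x\,o(x^2)$, this becomes $e(y)^2-y\,o(y)^2\equiv 1$. The odd part $o$ is therefore completely free, and only $e$ is determined by it. For instance, $T(\delta,1,\tfrac{\delta}{2},c)$ is $\Sigma_4$-symplectic for every $c$.

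So $r_3=r_5=\dots=0$ is a normalization that must be \emph{achieved} by a change of basis. The recursion \eqref{recformulaorthskew}, which says $e(y)=\delta\sqrt{1+y}$, only follows afterwards. Your proposed tool cannot achieve this normalization:
\begin{itemize}
\item An upper triangular Toeplitz $P$ commutes with a Toeplitz $S$, so it leaves every $r_k$ unchanged.
\item If you instead keep $S=\mathcal J_m(\delta)$, no $P$ can give $J=\sigma\Sigma_m$, because $\mathcal J_m(\delta)$ is not $\Sigma_m$-symplectic for $m\ge 4$.
\end{itemize}
Hence neither the existence of the normal form nor your uniqueness argument for the $r_k$ goes through as written.

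\textbf{A repair is to reverse the order.} The target $T_0=N+\delta\sqrt{I+N^2}=T(\delta,1,r_2,0,r_4,\dots)$ is $\Sigma_m$-symplectic and similar to $\mathcal J_m(\delta)$, so first conjugate $S$ to exactly $T_0$. The transformed form $J'$ then satisfies $T_0^\top J'T_0=J'$. Hence $\Sigma_m^{-1}J'$ commutes with the nonderogatory $T_0$, which gives $J'=\Sigma_m q(N)$. Skew-symmetry forces $q$ to be even, and invertibility gives $q(0)\neq 0$. Now take $P=p(N)$ with $p$ even and $p^2q=\sigma:=\operatorname{sign}q(0)$; the square root is real because $\sigma q(0)>0$. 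This $P$ commutes with $T_0$ and gives $P^\top J'P=\sigma\Sigma_m$.

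\textbf{Case iv) needs the analogous argument, but your reduction to Theorem~\ref{cbilkunithm} uses the wrong model.} Complexifying with the bilinear form turns the block into a sign-free paired block of type iii), which forgets $\sigma_j$. The right model identifies $\mathbb R^{2m}$ with $\mathbb C^m$ via $\Lambda_j\leftrightarrow\bar\lambda_j$ and $\Theta_j\leftrightarrow i\bar\lambda_j$. Then, with $x,y\leftrightarrow z,w$, one has $x^\top J_j y=\sigma_j\operatorname{Im}(z^*R_mw)$. The block becomes the $R_m$-unitary matrix $\bar\lambda_j\bigl(iN+\sqrt{I-N^2}\bigr)$. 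Its even part $\sqrt{1-y}$ is exactly \eqref{bruuhl}, and it is this Hermitian structure, not the complex bilinear one, that carries $\sigma_j$.
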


The signs $\sigma_j\in\{1,-1\}$ appearing in the blocks in~\eqref{symmunit1} and~\eqref{symmunit4}
are additional invariants. For a fixed unimodular eigenvalue $\lambda\in\mathbb C$, the collection
of all signs associated with the corresponding blocks is called the \emph{sign characteristic} of $\lambda$, see \cite{GohLR05} for more information about
the sign characteristic of matrices that have symmetry structures with respect
to an indefinite inner product.

An important tool in the remainder of the paper will be \emph{$J$-symplectic reflectors},
sometimes also called \emph{$J$-symplectic transvections}.

\begin{definition}\label{def:sr}
Let $u\in \mathbb F^{2n}\setminus\{0\}$ and $c\in\mathbb F\setminus\{0\}$.
Then a matrix of the form $I_{2n}+c uu^\top J$ is called a \emph{$J$-symplectic reflector}.
\end{definition}

Since $J^\top=-J$ and since for any $u\in\mathbb F^{2n}$ we have $u^\top Ju=0$, it follows easily from
\[
(I_{2n}+c uu^\top J)^\top J(I_{2n}+c uu^\top J)=
J-c Juu^\top J+c Juu^\top J+c^2 Juu^\top Juu^\top J=J
\]
that a $J$-symplectic reflector is symplectic. Obviously it is a rank-one
perturbation of the identity, and moreover we have $(I_{2n}+cuu^\top J)^{-1}=I_{2n}-cuu^\top J$,
i.e., the inverse of a symplectic reflector is again a symplectic reflector.
Furthermore, by scaling $u$ with the reciprocal of
a square root of $c$, we may assume that $c=1$ in the case $\mathbb F=\mathbb C$.
Similarly, we may assume $c\in\{1,-1\}$ in the case $\mathbb F=\mathbb R$.

It was shown in \cite[Lemma 7.1]{MMRR5} that if $S,\widetilde S\in\mathbb C^{2n,2n}$ are two $J$-symplectic matrices satisfying
$\operatorname{rank}(S-\widetilde S)=1$, then $\widetilde S$ can be obtained from $S$ by left multiplication with a
$J$-symplectic reflector, i.e., there exists a nonzero vector $u\in\mathbb C^{2n}$ such that
$\widetilde S=(I_{2n}+uu^\top J)S$. If both $S$ and $\widetilde S$ are real, then analogously it can be
shown that there exists a nonzero $u\in\mathbb R^{2n}$ such that $\widetilde S=(I_{2n}\pm uu^\top J)S$.
We highlight that despite of its multiplicative nature, a symplectic rank-one perturbation of the form $\widetilde S=(I_{2n}\pm uu^\top J)S$
can also be interpreted as a particular additive perturbation of the form $\widetilde S=S\pm uu^\top JS$. Therefore, the following
observation on additive rank-one perturbations also applies to structure-preserving rank-one perturbation of symplectic matrices.

\begin{remark}\label{rem.geochangeone}\rm
If $A$ is any matrix and $B$ is a second matrix of the same size that has rank one, then it is obvious that the ranks of $A$ and $A+B$ can differ at most by one. This simple, but important
observation leads to a crucial fact that we will use frequently in the following:
\emph{the geometric multiplicity of any particular eigenvalue
of a square matrix can change by at most one after a rank-one perturbation}.
\end{remark}

The fact that a symplectic rank-one perturbation of a $J$-symplectic matrix $S$ has the form  $\widetilde S=(I_{2n}\pm uu^\top J)S$
for some $u\in\mathbb F^{2n}$ allows a parametrization of the set of structure-preserving rank-one perturbations.
Based on this observation the following result characterizing the change in the Jordan structure of $J$-symplectic
matrices under generic structure-preserving perturbations was obtained
in \cite{MMRR5}. Note that the statement refers to complex eigenvalues even in the case
$\mathbb F=\mathbb R$.

\begin{theorem}\label{thm.rankonepert}
Let $J\in\mathbb F^{2n, 2n}$ be skew-symmetric and invertible and let $S\in\mathbb F^{2n, 2n}$
be $J$-symplectic. Then, there exists a generic set $\Omega\subseteq\mathbb F^{2n}$
such that for all $u\in\Omega$ and all $\sigma\in\{1,-1\}$ the matrix
\[
\widetilde S=(I_{2n}+\sigma uu^\top J)\cdot S.
\]
satisfies the following statements:
\begin{itemize}
\item[${\rm (1)}$] Let $\lambda\in\mathbb C$ be any eigenvalue of $S$, and let it have geometric multiplicity $g$
and partial multiplicities $n_1\geq n_2\geq \cdots\geq n_g$. If $g>1$ then the following statements hold:
\begin{itemize}
\item[${\rm (1a)}$] If $\lambda\neq 1,-1$ or if $\lambda\in\{1,-1\}$ and $n_1$ is even,
then $\lambda$ is an eigenvalue of $\widetilde S$ having precisely the partial multiplicities
$n_{2}\geq\cdots\geq n_{g}$.
\item[${\rm (1b)}$] If $\lambda\in\{1,-1\}$ and $n_1$ is odd,
then then $\lambda$ is an eigenvalue of $\widetilde S$ having precisely the partial multiplicities
$n_{2}+1\geq n_{3}\geq\cdots\geq n_{g}$.
\end{itemize}
If, on the other hand, $g=1$, then $\lambda$ is not an eigenvalue of $\widetilde S$.
\item[${\rm (2)}$] If $\mu\in\mathbb C$ is an eigenvalue of $\widetilde S$, but not of $S$,
then $\mu$ is a simple eigenvalue of $\widetilde S$.
\end{itemize}
\end{theorem}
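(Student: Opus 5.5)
The plan is to work with the additive form of the perturbation. For $\lambda\in\mathbb C$ we have $\widetilde S-\lambda I=(S-\lambda I)+\sigma uu^\top JS$, which is a rank-one perturbation of $S-\lambda I$. The partial multiplicities of $\lambda$ as an eigenvalue of $\widetilde S$ are determined by the numbers $\operatorname{rank}(\widetilde S-\lambda I)^j$, $j=1,\dots,2n$. Each such rank is $\ge r$ exactly when some $r\times r$ minor of $(\widetilde S-\lambda I)^j$ is nonzero, and these minors are polynomials in the entries of $u$.

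Genericity then reduces to a standard principle, which I take as the framework. For each eigenvalue $\lambda$ of $S$ (finitely many, and the same for both signs $\sigma$), suppose we know upper bounds $\rho_j$ with $\operatorname{rank}(\widetilde S-\lambda I)^j\le\rho_j$ for all $u$. Suppose also that we exhibit a single vector $u_0$ attaining all of them. Then the set of $u$ attaining the bounds is the complement of a proper algebraic set. Intersecting over the finitely many $\lambda$ and over $\sigma=\pm1$ gives $\Omega$. Over $\mathbb R$ the same argument works, since a real polynomial that is nonzero at one real point is not identically zero. The work therefore splits into (a) proving the upper bounds and (b) constructing witnesses $u_0$.

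For (a), Remark~\ref{rem.geochangeone} applied to powers gives $\operatorname{rank}(\widetilde S-\lambda I)^j\le\operatorname{rank}(S-\lambda I)^j+j$. A sharper form is the classical interlacing result for rank-one perturbations (cf.\ \cite{HorM94,Sav03}): the partial multiplicities of $\lambda$ in $\widetilde S$ are at least $n_2\ge\cdots\ge n_g$. This yields exactly the rank bounds corresponding to (1a).

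For $\lambda=\pm1$ with $n_1$ odd I would add the structural constraint from Theorems~\ref{cbilkunithm} and~\ref{symmunitthm}: odd-sized blocks at $\pm1$ occur in pairs, so $n_1=n_2$. Removing only the largest block would leave an unpaired block of odd size $n_2$, which is impossible for the $J$-symplectic matrix $\widetilde S$. Combined with interlacing (geometric multiplicity $g-1$, and at most one block allowed to grow), the most degenerate admissible structure is $n_2+1,n_3,\dots,n_g$. This structure gives the bounds $\rho_j$ in case (1b).

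For (b), I pass to the canonical form via the equivalence $(S,J)\mapsto(P^{-1}SP,P^\top JP)$. This maps reflectors to reflectors via $u\mapsto P^{-1}u$ and preserves genericity, so I may assume $S=S_1\oplus\cdots\oplus S_p$. For a fixed $\lambda$ I choose $u_0$ supported only on the blocks belonging to the largest Jordan block at $\lambda$ (one block, or one paired block in case ii), placing a nonzero entry at the position that couples to the end of the Jordan chain. The remaining blocks are then untouched, and a direct computation on a single small canonical block should show the required effect: the block of size $n_1$ is destroyed, respectively an odd pair of size $n_1$ turns into a single even block of size $n_1+1$. This explicit single-block computation, especially for the paired odd blocks in ii) and their real counterparts, is the step I expect to be the main obstacle.

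A single $u_0$ serving all eigenvalues simultaneously is not needed, because each eigenvalue gives its own nonempty Zariski-open set, and finitely many of these intersect generically. The case $g=1$ is the same argument with the block removed entirely.

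For (2), write $\det(\widetilde S-zI)=q(z)\,h_u(z)$, where $q$ collects the surviving old eigenvalue factors. The discriminant of $h_u$ and its resultant with $\prod_\lambda(z-\lambda)$ are polynomials in $u$. It then suffices to find one $u$, again block-wise in canonical form, for which the newly created eigenvalues are distinct and not old ones. I would aim to choose $u$ so that $\widetilde S$ is block diagonal with each perturbed block having explicitly computable, pairwise distinct new eigenvalues $\mu,\mu^{-1}$.
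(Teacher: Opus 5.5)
You take a genuinely different route from the paper. The paper does not reprove the complex case: it quotes it from the earlier rank-one paper and gets the real case by intersecting the complex generic set with $\mathbb R^{2n}$ via Lemma~\ref{lem.c2r}. Your self-contained plan for part (1) is sound in outline, but three points need repair.
\begin{itemize}
\item[(i)] The bound in (1b) must hold for \emph{every} $u$, so ``geometric multiplicity $g-1$, at most one block grows'' cannot justify it; that is generic behaviour. The correct reason is this: if an admissible list dominating $(n_2,\dots,n_g)$ had $\dim\ker(\widetilde S-\lambda I)^j=\sum_{i\ge2}n_i$ for some $j>n_2$, it would equal $(n_2,\dots,n_g)$. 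That list contains an odd number of blocks of the odd size $n_1$, so it is not admissible.
\item[(ii)] ``One nonzero entry'' does not give a witness for a pair $\mathcal J_m(\lambda)\oplus\mathcal J_m(\lambda)^{-\top}$, because $u=e_m$ alone keeps the block triangular. Take $u=e_m+e_{m+1}$ instead; for an even block at $\pm1$, $u=e_m$ does work.
\item[(iii)] No real canonical-form computations are needed. A polynomial vanishing on $\mathbb R^{2n}$ vanishes identically, so complex witnesses suffice, and this is exactly Lemma~\ref{lem.c2r}.
\end{itemize}

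The genuine gap is part (2): the witness you propose cannot exist. If $u$ has nonzero parts $u_i,u_j$ in two canonical blocks, then $uu^\top JS$ contains the coupling block $u_iu_j^\top J_jS_j$. So $\widetilde S$ is never block diagonal with more than one perturbed block. If instead $u$ lives on a single block, all other eigenvalues of $S$ keep their full multiplicity. Then, in general, $h_u=\det(\widetilde S-zI)/q(z)$ still vanishes at untouched old eigenvalues, its resultant with $\prod_\lambda(z-\lambda)$ is zero, and that $u$ certifies nothing. Your remark that no single $u_0$ must serve all eigenvalues is right for (1) but wrong for (2). The discriminant/resultant test needs one $u$ that simultaneously cuts every old eigenvalue down to its generic multiplicity and makes all new eigenvalues simple and distinct. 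Block-wise computations cannot produce such a $u$, so an extra idea is required. One option is to take $u$ in your generic set for (1) and replace it by $tu$ with $t\to0$. From $\det(zI-\widetilde S)=\det(zI-S)\,(1-t^2u^\top JS(zI-S)^{-1}u)$, the new roots near each $\lambda$ solve $(z-\lambda)^{d}=t^2\gamma(u)+(\mbox{higher order})$. Here $d$ is the lost multiplicity at $\lambda$, and $\gamma(u)\neq0$ precisely because $u$ satisfies (1) at $\lambda$. A Newton-polygon argument then gives $d$ distinct new eigenvalues near $\lambda$, none equal to an old one, and clusters at different $\lambda$ are disjoint for small $t$. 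This local analysis is exactly what the paper takes from the cited rank-one result rather than proving.
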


\proof
The proof for the case $\mathbb F=\mathbb C$ was
given in \cite{MMRR5}. If on the other hand $\mathbb F=\mathbb R$, then
we can first apply the result for the complex case to the real matrix
$S$ which gives a generic set $\widetilde\Omega\subseteq\mathbb C^n$
such that (1) and (2) both hold. The statement for the case $\mathbb F=\mathbb R$ then follows
immediately from the following lemma that implies that the set
$\Omega:=\widetilde\Omega\cap\mathbb R^n$ is generic as well.
\eproof

\begin{lemma}[\cite{MMRR4}]\label{lem.c2r}
Let $\mathcal A\subseteq \mathbb C^n$ be a proper algebraic set. Then
$\mathcal A\cap\mathbb R^n$ is a proper algebraic set in $\mathbb R^n$.
\end{lemma}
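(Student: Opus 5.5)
We have to prove Lemma~\ref{lem.c2r}: if $\mathcal A\subseteq\mathbb C^n$ is a proper algebraic set, then $\mathcal A\cap\mathbb R^n$ is a proper algebraic subset of $\mathbb R^n$. The plan is to split each defining complex polynomial into its real and imaginary parts. These parts are real polynomials, they cut out $\mathcal A\cap\mathbb R^n$, and at least one of them is nonzero.

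Let $\mathcal A$ be the common zero set of $p_1,\dots,p_r\in\mathbb C[x_1,\dots,x_n]$, and assume that not all $p_j$ are the zero polynomial. Conjugating coefficients, write
\[
p_j=q_j+iw_j,\qquad q_j=\tfrac12\bigl(p_j+\overline{p_j}\bigr),\quad w_j=\tfrac1{2i}\bigl(p_j-\overline{p_j}\bigr).
\]
Here $\overline{p_j}$ denotes the polynomial with conjugated coefficients, so $q_j,w_j\in\mathbb R[x_1,\dots,x_n]$. For $x\in\mathbb R^n$, the values $q_j(x)$ and $w_j(x)$ are the real and imaginary parts of $p_j(x)$. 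Hence $p_j(x)=0$ if and only if $q_j(x)=w_j(x)=0$. It follows that $\mathcal A\cap\mathbb R^n$ is the common zero set in $\mathbb R^n$ of the finitely many real polynomials $q_1,w_1,\dots,q_r,w_r$, so it is algebraic.

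For properness, pick $j$ with $p_j\neq0$. Then $q_j\neq0$ or $w_j\neq0$ as polynomials. The remaining point is that a nonzero real polynomial does not vanish identically on $\mathbb R^n$. This follows by induction on $n$. For $n=1$, a nonzero polynomial has finitely many roots. For the inductive step, write the polynomial as $\sum_k c_k(x_1,\dots,x_{n-1})x_n^k$ with some $c_k\neq0$. By induction, $c_k$ is nonzero at some point of $\mathbb R^{n-1}$; fixing that point gives a nonzero univariate polynomial in $x_n$, which has a non-root. Thus some real point lies outside $\mathcal A\cap\mathbb R^n$, so this set is proper in the sense of the paper's definition.

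No step is a real obstacle. The only subtlety is keeping apart two notions of "nonzero": being nonzero as a formal polynomial and being nonzero as a function on $\mathbb R^n$. The paper's parenthetical remark treats these as equivalent, and the induction argument above justifies that equivalence.
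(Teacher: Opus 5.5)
Your proof is correct and complete. The coefficientwise splitting $p_j=q_j+iw_j$ shows that $\mathcal A\cap\mathbb R^n$ is the common real zero set of the real polynomials $q_j,w_j$, and your induction, which shows that a nonzero polynomial cannot vanish on all of $\mathbb R^n$, settles properness. The paper does not prove this lemma itself; it only quotes it from the literature. Your argument is the standard proof of this fact, so nothing is missing.
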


In the following we aim to extend Theorem~\ref{thm.rankonepert} to the case $k>1$. The main idea is to successively apply
Theorem~\ref{thm.rankonepert} and for this, we need to generalize \cite[Lemma 7.1]{MMRR5}, i.e.,
we have to answer the following question: if $S,\check S$ are two symplectic matrices satisfying rank$(\check S-S)=k$, do there exist
$k$ symplectic reflectors $S_1,\dots,S_k$ such that $\check S=S_k\cdot\ldots\cdot S_1\cdot S$? Equivalently, we can consider
the matrix $S^{-1}\check S$ that satisfies rank$(S^{-1}\check S-I_{2n})=k$
and ask if it can be written as a product of $k$ symplectic reflectors.
This question will be investigated in detail in the next section.

\section{Products of symplectic reflectors}\label{sec:fac}

It is well known that the group of symplectic matrices is generated by
symplectic reflectors and an elementary proof of this fact can be found in \cite{BueR17}.
As pointed out by \cite{Dop15}, a constructive method
to factorize any real symplectic $2n\times 2n$ matrix into a product of at most
$4n$ symplectic reflectors can be obtained combining the procedures from
\cite{Meh79} and \cite{FlaMZ91}. For the purpose of this paper, it is however necessary
to write a symplectic matrix $S$ satisfying rank$(S-I_{2n})=k$ as a product of exactly
$k$ symplectic reflectors. As revealed in the introduction this is not always possible.

\begin{example}\label{ex.gpwrong}\rm
Let $S=-I_2$ and $J=\Sigma_2$. Then $S$ is $J$-symplectic with $\operatorname{rank}(S-I_2)=2$, but it cannot be written as a product
of two symplectic reflectors. Indeed, assume that this is possible, i.e., there exist $u,v\in\mathbb C^2$
such that $S=(I_2+uu^\top J)(I_2+vv^\top J)$. Since the inverse of $I_2+uu^\top J$ is $I_2-uu^\top J$ which
is again a symplectic reflector, we obtain that
\begin{equation}\label{ex.5.10.23}
(I_2-uu^\top J)\mat{cc}-1&0\\ 0&-1\rix=(I_2+vv^\top J)\mat{cc}1&0\\ 0&1\rix.
\end{equation}
On the left hand side, we have a rank-one perturbation of $-I_2$. Since by Remark~\ref{rem.geochangeone} the geometric multiplicity
of eigenvalues can change at most by one in this case, it follows
that $-1$ is still an eigenvalue of the matrix on the left hand side. But since this matrix is
$J$-symplectic, it follows from the canonical form in Theorem~\ref{cbilkunithm} that its second
eigenvalue must be equal to $-1$ as well, i.e., the spectrum of the matrix on the left hand side
consists of the single element $-1$. A similar argument shows that the matrix on the right hand
side can only have the eigenvalue $+1$ which contradicts the equality of both matrices. This is in line with Theorem~\ref{thm.rankonepert} which states
that generically the left hand side of~\eqref{ex.5.10.23} will be similar to
a $2\times 2$ Jordan block associated with the eigenvalue $-1$, while
the right hand side of~\eqref{ex.5.10.23} will be similar to
a $2\times 2$ Jordan block associated with the eigenvalue $1$.

It is possible to write $S$ as a product of three symplectic reflectors though.
One can easily verify that for
\[
u_1=\mat{c}i\\ 0\rix,\;u_2=\mat{c}1\\ 2i\rix,\;u_3=\mat{c}1+i\\ 2i\rix
\]
we have $(I_2+u_3u_3^\top J)(I_2+u_2u_2^\top J)(I_2+u_1u_1^\top J)S=I_2$ and hence
\[
S=(I_2-u_1u_1^\top J)(I_2-u_2u_2^\top J)(I_2-u_3u_3^\top J).
\]
\end{example}

The question under which conditions a symplectic matrix $S$ satisfying rank$(S-I_{2n})=k$ can be written as a product of exactly 
$k$ symplectic reflectors is answered in the following theorem. This result highlights that indeed blocks like the one
considered in Example~\ref{ex.gpwrong} are problematic and lead to exceptional cases.

\begin{theorem}\label{thm:fac}
Let $S\in\mathbb F^{2n,2n}$ be $J$-symplectic such that $\operatorname{rank}(S-I_{2n})=k$.
Then $S$ is a product of $k$ $J$-symplectic reflectors, i.e., there exist
$u_1,\dots,u_k\in\mathbb F^{2n}$ and $c_1,\dots,c_k\in\{+1,-1\}$ (in the case $\mathbb F=\mathbb C$ all $c_j$ can
be chosen to be $+1$) such that
\[
S=\prod_{j=1}^k(I_{2n}+c_ju_ju_j^\top J),
\]
unless $S$ is similar to a matrix of the form $I_{2(n-p)}\oplus (-I_{2p})$
for some $p=1,\dots,n$.
In the latter case, $S$ is a product of $k+1$ $J$-symplectic reflectors.
\end{theorem}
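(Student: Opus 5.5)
We argue by induction on $k=\operatorname{rank}(S-I_{2n})$, peeling off one reflector at a time. The case $k=0$ is trivial. For $k=1$ we use \cite[Lemma 7.1]{MMRR5} with $\widetilde S=S$ and the identity as the second matrix; this gives $S=I_{2n}+cuu^\top J$. For $k\ge 2$ the goal is a vector $u$ and a sign $c$ such that $S':=(I_{2n}-cuu^\top J)S$ is $J$-symplectic with $\operatorname{rank}(S'-I_{2n})=k-1$ and $S'$ not exceptional. Then $S=(I_{2n}+cuu^\top J)S'$, and the induction hypothesis applied to $S'$ gives the claim.

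Write $W:=\operatorname{im}(S-I_{2n})$; it has dimension $k$. Because $S$ is symplectic, $\ker(S-I_{2n})=W^{\perp_J}$. We compute
\[
S'-I_{2n}=(S-I_{2n})-cuu^\top JS .
\]
If we take $u\in W$, then $\operatorname{im}(S'-I_{2n})\subseteq W$. The rank drops by one exactly when some nonzero $x$ satisfies $(S-I_{2n})x=cu\,(u^\top JSx)$ with $u^\top JSx\neq0$. This suggests choosing $u=(S-I_{2n})x$ and normalizing $c$ so that $c\,u^\top JSx=1$. We then need $u^\top JSx\neq0$. Using $S^\top JS=J$, this reduces to
\[
u^\top JSx=x^\top(S^\top-I)JSx=x^\top Jx-x^\top JSx=-x^\top JSx ,
\]
so the requirement is that the bilinear form $\beta(x,y)=x^\top JSy$ be nonisotropic at $x$. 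Over $\mathbb R$ we set $c=\pm1$ after rescaling $x$; over $\mathbb C$ we absorb $c$ into $u$.

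Once $x$ is fixed, the kernel of $S'-I_{2n}$ is $\ker(S-I_{2n})\oplus\operatorname{span}\{x\}$. Since $\ker(S-I_{2n})$ has dimension $2n-k$, the rank is exactly $k-1$.

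The next task is to decide when such an $x$ exists and when it can be chosen so that $S'$ avoids the exceptional form. The form $\beta$ is nonzero on $x$ outside a proper algebraic set unless $x^\top JSx\equiv0$. That means $JS$ is skew-symmetric, i.e.\ $JS=-(JS)^\top=-S^\top J$, so $S^\top J S=-S^\top JS\cdot$\ldots; more directly, $S=-J^{-1}S^\top J=-S^{-1}$, so $S^2=-I$. This must be reconciled with the $\pm1$ structure. A cleaner approach is to restrict to $x$ modulo $\ker(S-I)$ and look for nonisotropy of $x^\top JSx$ on the quotient. This form vanishes identically precisely when $S$ acts as $-I$ on a complement, which is the exceptional case $I\oplus(-I_{2p})$.

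I would verify this via the canonical forms of Theorems~\ref{cbilkunithm} and~\ref{symmunitthm}, checking block by block that $x^\top JSx$, with $x$ ranging over the generalized eigenspace, is not identically zero except on diagonal $-1$ blocks. On a Jordan block of type i) or ii), $JS-(JS)^\top$ does not vanish on the complement of the kernel.

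The main obstacle is the second requirement: $S'$ must not become exceptional. We must show that among the admissible $x$, which form a generic set, there is one for which $S'$ is not similar to $I\oplus(-I_{2q})$. Here I would use Theorem~\ref{thm.rankonepert}. Generically, the left multiplication either destroys eigenvalues or creates simple new ones, and a nontrivial Jordan block appears whenever $S$ is not diagonalizable with eigenvalues $\pm1$. The delicate case is when $S'$ has only eigenvalues $\pm1$ and is diagonalizable. In that case the $-1$-eigenspace of $S$ must contain a nondiagonal block, or eigenvalues other than $\pm1$, and a dimension count on the generic set should exclude this.

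For the exceptional case the argument is short. Suppose $S\sim I\oplus(-I_{2p})$. Take any reflector $R$ with $u$ chosen generically. By Remark~\ref{rem.geochangeone} and Theorem~\ref{thm.rankonepert}, $RS$ has rank$(RS-I)=k$ again, or $k+1$ and not exceptional after one more step, since a $2\times2$ Jordan block at $-1$ appears. Applying the nonexceptional case then gives $k+1$ reflectors. That fewer than $k+1$ is impossible follows by the argument of Example~\ref{ex.gpwrong}, applied to the $-1$ eigenspace.
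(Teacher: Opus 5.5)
\paragraph{Verdict.} Your rank-reduction step is correct, but the step that keeps the intermediate matrix non-exceptional is missing, and the exceptional-case upper bound as stated fails. Your route also differs from the paper's. It is the classical coordinate-free transvection argument of Dieudonn\'e and O'Meara, not the paper's block-by-block constructions in canonical form.

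\paragraph{What works.} With $u=(S-I_{2n})x$ and $c\,u^\top JSx=1$ you do get $\ker(S'-I_{2n})=\ker(S-I_{2n})\oplus\operatorname{span}\{x\}$, and this works verbatim over $\mathbb R$. Your derivation of when $q(x)=x^\top JSx$ vanishes has a sign slip. Since $(JS)^\top=-S^\top J$, the condition $q\equiv0$ means $JS=S^\top J$, i.e.\ $S=J^{-1}S^\top J=S^{-1}$. So $q\equiv 0$ iff $S^2=I_{2n}$, which for $S\neq I_{2n}$ is exactly the exceptional class, and no canonical-form check is needed.

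\paragraph{The gap: keeping $S'$ non-exceptional.} This is the step you yourself call the main obstacle, and it is the real content of the theorem. It is what the paper spends its effort on: verifying in every subcase that no block of the form~\eqref{exceptblock} is generated, and Subcase~(2a). Theorem~\ref{thm.rankonepert} cannot do this job. It describes $u$ in a generic subset of $\mathbb F^{2n}$. Your admissible $u$ lie in the $k$-dimensional subspace $W$ and satisfy the normalization $c\,u^\top JSx=1$, so they form a proper algebraic subset of $\mathbb F^{2n}$. Moreover, for generic $u$ the geometric multiplicity of the eigenvalue $1$ never goes up, whereas for your $u$ it must. So the theorem says nothing about your $S'$, and the promised ``dimension count'' is not supplied.

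\paragraph{A way to close it.} Only odd $k\ge 3$ matters, because exceptional matrices have $\operatorname{rank}(S'-I)=2p$, which is even. Exceptionality of $S'_x$ is the polynomial condition $(S'_x)^2=I$ on the nonempty Zariski-open set $\{q\neq 0\}$, so a single good $x$ suffices. Suppose every $x$ were bad.
\begin{itemize}
\item Then $E_x=\operatorname{im}(S'_x-I)$ is a nondegenerate hyperplane of $W$, complementary to the radical $\operatorname{span}\{w\}$ of $W$, which is therefore one-dimensional.
\item Also $(S+I)E_x\subseteq\operatorname{span}\{u_x\}$.
\item Letting $x$ vary forces $(S+I)W\subseteq\operatorname{span}\{w\}$, i.e.\ $S\sim\mathcal J_2(1)\oplus I_{2n-k-1}\oplus(-I_{k-1})$.
\item For this matrix one checks directly, as in the paper's Subcase~(2a), that a suitable $x$ produces a $\mathcal J_2(-1)$ block, which is a contradiction.
\end{itemize}
An argument of this kind has to be written out.

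\paragraph{The exceptional case.} The upper bound is wrong as stated. A generic reflector does not work when $p<n$. Apply Theorem~\ref{thm.rankonepert}(1b) to the eigenvalue $1$, all of whose partial multiplicities equal $1$ (odd). Generically the geometric multiplicity of $1$ drops by one, so $\operatorname{rank}(RS-I_{2n})=k+1$, and you would end up with $k+2$ reflectors.

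You must take $u\in\ker(S+I_{2n})$; the paper uses $u=e_{2(n-p)+1}$. Then $u^\top JS=-u^\top J$, so $RS$ still fixes $\ker(S-I_{2n})=\ker(S+I_{2n})^{\perp_J}$. Hence $\operatorname{rank}(RS-I_{2n})=k$, and $RS$ acquires a $\mathcal J_2(-1)$ block. So $RS$ is non-exceptional with rank $k$, and the non-exceptional case gives $k+1$ reflectors for $S$.

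The lower bound via the multiplicity argument of Example~\ref{ex.gpwrong} is fine; it is essentially the paper's Subcase~(2b). In your framework it would also follow from $q\equiv 0$, once you show that every rank-reducing reflector necessarily has the form $u=(S-I_{2n})x$.
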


\begin{remark}\rm
Theorem~\ref{thm:fac} is a special case of a more general result by Dieudonn\'{e} \cite{Die55} that was reproved in 
\cite[Theorem 2.1.11]{OMe78}. It is shown that in the case $F\neq\mathbb F_2$ a symplectic matrix satisfying $\operatorname{rank}(S-I_{2n})=k$ can be written as a product of $k$ symplectic reflectors if and only if it is not \emph{hyperbolic}. A symplectic $S$ is called
hyperbolic if for all vectors $x\in F^{2n}$ one has $x^\top SJx=0$ (with $J$ as in~\eqref{eq:J}) and it is shown that this is 
equivalent to $S$ being similar to a matrix of the form $I_{2(n-p)}\oplus (-I_{2p})$. (Also the case $F=\mathbb F_2$ is
considered in \cite{Die55}, but then in addition to the hyperbolicity
there are more exceptional cases for which $k+1$ reflectors are needed instead of $k$. Interestingly, the list of exceptions presented 
in \cite{Die55} turned out to be incomplete, but this was corrected later in \cite{Cal76}.)
\end{remark}

The proofs given in \cite{Die55} or \cite{OMe78} for arbitrary fields are quite involved. We will present an independent proof using different arguments (via canonical forms)
for the case $F=\mathbb C$. In the author's opinion, this proof gives new insight in what is happening from the viewpoint of low rank perturbation theory and perfectly explains why
the exceptional condition in Theorem~\ref{thm:fac} is exactly how it is.

\medskip

{\it Proof} (of Theorem~\ref{thm:fac}). Let $\mathbb F=\mathbb C$. Without loss of generality we may assume
that $S$ and $J$ are in the canonical form from Theorem~\eqref{cbilkunithm}.
As noted above, blocks
of the form~\eqref{cbilkuni2} associated with the eigenvalue $-1$ and having
size $m_j=1$, that is,
\begin{equation}\label{exceptblock}
S_j=\mat{cc} -1&0\\ 0&-1\rix,\quad J_j=\mat{cc}0&1\\ -1&0\rix,
\end{equation}
are problematic blocks. We will therefore consider two main cases.

{\it Case (1)}: \emph{The canonical form for $(S,J)$ does not have any blocks of the form~\eqref{exceptblock}}.
We will prove the assertion by induction on $k$. The case $k=0$ is trivial, thus
assume that $k>0$. Since $\operatorname{rank}(S-I_{2n})=k$ it follows that the geometric multiplicity
of the eigenvalue $\lambda_0=1$ is $2n-k$. We now consider
several subcases and in any subcase, we will show that there exists a vector $u\in\mathbb C^{2n}$ such that $\widehat S:=(I_{2n}+uuJ)S$
satisfies $\operatorname{rank}(\widehat S-I_{2n})=k-1$ and still does not have any blocks of the form~\eqref{exceptblock}.
The assertion of the theorem then follows by applying the
induction hypothesis to the new matrix $\widehat S$.

{\it Subcase (1a)}: \emph{The canonical form for $(S,J)$ contains a block of the form~\eqref{cbilkuni3}}.
By Theorem~\ref{cbilkunithm} we may assume that $S$ and $J$ are in the form
\[
S=\mat{cc}\mathcal J_{m}(\lambda)&0\\ 0& \big(\mathcal J_{m}(\lambda)\big)^{-T}\rix\oplus\widetilde S,\quad
J=\mat{cc} 0 & I_{m}\\ -I_{m} & 0\rix\oplus\widetilde J
\]
for some $\lambda\in\mathbb C$ satisfying $\lambda^2\neq 1$ and for some
$m$. Then choosing $u=\alpha e_m+\beta e_{2m}$, we obtain that
\[
\widehat S:=(I_{2n}+uu^\top J)S=S_1\oplus\widetilde S,
\]
where
\begin{eqnarray*}
S_1&=&\mat{cccc}I_{m-1}&0&0&0\\ 0&1-\alpha\beta&0&\alpha^2\\ 0&0&I_{m-1}&0\\ 0&-\beta^2&0&1+\alpha\beta\rix\cdot
\mat{cccc}\mathcal J_{m-1}(\lambda)&e_{m-1}&0&0\\ 0&\lambda&0&0\\
0&0&\big(\mathcal J_{m-1}(\lambda)\big)^{-T}&0\\ 0&0&\ast&\lambda^{-1}\rix\\
&=&\mat{cccc}\mathcal J_{m-1}(\lambda) & e_{m-1} & 0&0\\
0&(1-\alpha\beta)\lambda & \ast& \alpha^2\lambda^{-1}\\
0&0&\big(\mathcal J_{m-1}(\lambda)\big)^{-T}& 0\\ 0&-\beta^2\lambda & \ast & (1+\alpha\beta)\lambda^{-1}\rix,
\end{eqnarray*}
where the symbol $\ast$ stands for block entries that are not of interest. It is easily seen that the
eigenvalues of $S_1$ are given by $\lambda$ and $\lambda^{-1}$ (both with algebraic
multiplicity $m-1$) and the eigenvalues of the two by two matrix
\[
M:=\mat{cc}(1-\alpha\beta)\lambda&\alpha^2\lambda^{-1}\\ -\beta^2\lambda&(1+\alpha\beta)\lambda^{-1}\rix
\]
whose characteristic polynomial is given by
$p(t)=t^2-\big((1-\alpha\beta)\lambda+(1+\alpha\beta)\lambda^{-1}\big)t+1$. Thus, for the choice
\[
\alpha=\frac{2-(\lambda+\lambda^{-1})}{\lambda^{-1}-\lambda}\quad\mbox{and}\quad\beta=1
\]
we obtain $(1-\alpha\beta)\lambda+(1+\alpha\beta)\lambda^{-1}=2$ and hence
$M$ has a double eigenvalue at one and the Jordan canonical form $\mathcal J_2(1)$.
Clearly, then also $S_1$ has the Jordan block $\mathcal J_2(1)$ in its Jordan canonical form
as well as $\widehat S$ (in addition to other Jordan blocks associated with $\lambda_0=1$
from the part $\widetilde S$ which has remained unchanged). Thus, we obtain
that $\operatorname{rank}(\widehat S-I_{2n})=k-1$. By construction, no block of the form~\eqref{exceptblock} 
has been generated in $\widehat S$.

{\it Subcase (1b)}: \emph{The canonical form for $(S,J)$ contains a block of the form~\eqref{cbilkuni1} associated with the
eigenvalue $\lambda=1$.} In that case, we may assume
by Theorem~\ref{cbilkunithm} that $S$ and $J$ are in the form
\[
S=T(1,1,r_2,\dots,r_{m-1})\oplus \widetilde S,\quad J=\Sigma_m\oplus\widetilde J.
\]
for some  $m=2\ell>0$, where $r_2,\dots,r_{m-1}$ are as in Theorem~\ref{cbilkunithm}. Then taking $u=i^\ell e_\ell$, we obtain that
\[
\widehat S:=(I_{2n}+uu^\top J)S=S_1\oplus \widetilde S,
\]
where
\[
S_1=\mat{cccccc}1&&&&&\\[-1ex] &\ddots&&&&\\ &&1&-1&&\\ &&&1&&\\[-1ex] &&&&\ddots&\\ &&&&&1\rix\cdot
\mat{cccccc}1&1&\ast&\ast&\ast&\ast\\[-1ex] &1&\ddots&\ast&\ast&\ast\\[-1ex]
&&\ddots&\ddots&\ast&\ast\\[-1ex]&&&\ddots&\ddots&\ast\\ &&&&1&1\\ &&&&&1\rix=
\mat{cccccc}\ddots&\ddots&\ast&\ast&\ast&\ast\\ &1&1&\ast&\ast&\ast\\[-1ex]
&&\ddots&0&\ast&\ast\\[-1ex]&&&\ddots&1&\ast\\[-1ex] &&&&1&\ddots\\[-1ex] &&&&&\ddots\rix
\]
Then one immediately sees that $\operatorname{rank}(S_1-I_m)=m-2$ which implies that
the geometric multiplicity of the eigenvalue $\lambda_0=1$ of $S_1$ (and thus $\widehat S$)
has increased by one. It follows that $\widehat S$ satisfies
$\operatorname{rank}(\widehat S-I_{2n})=k-1$. As in the previous case no block of the form~\eqref{exceptblock} has been generated in $\widehat S$.

{\it Subcase (1c)}: \emph{The canonical form for $(S,J)$ contains a block of the form~\eqref{cbilkuni2} associated with the
eigenvalue $\lambda=1$ with $m_j>1$.} In that case, we may assume by Theorem~\ref{cbilkunithm} that $S$ and $J$ are in the form
\[
S=\mat{cc}\mathcal J_{m}(1)&0\\ 0& \big(\mathcal J_{m}(1)\big)^{-T}\rix\oplus\widetilde S,\quad
J=\mat{cc} 0 & I_{m}\\ -I_{m} & 0\rix\oplus\widetilde J,
\]
where for simplicity we write $m$ instead of $m_j$. Then taking $u=\frac{1}{\sqrt{2}}(e_{m-1}+e_{2m})$, we obtain that
\[
\widehat S:=(I_{2n}+uu^\top J)S=S_1\oplus \widetilde S,
\]
where, using the abbreviation $z=\mat{ccc}(-1)^{m-1}&\dots&(-1)^2\rix=\mat{cccccc}1&-1&\dots&1&-1&1\rix$, we have
\begin{eqnarray*}
S_1&=&\mat{ccc|ccc}I_{m-2}&0&0&0&0&0\\ 0&1&-\frac{1}{2}&0&\frac{1}{2}&0\\ 0&0&1&0&0&0\\\hline
0&0&0&I_{m-2}&0&0\\ 0&0&0&0&1&0\\ 0&0&-\frac{1}{2}&0&\frac{1}{2}&1\rix\cdot
\mat{ccc|ccc}\mathcal J_{m-2}(1) & e_{m-2} & 0&0 & 0&0\\ 0&1&1&0&0&0\\ 0&0&1&0&0&0\\\hline\rule{0mm}{4.5mm}
0&0&0&\big(\mathcal J_{m-2}(1)\big)^{-T}& 0&0\\ 0&0&0&-z&1 & 0\\
0&0&0&z&-1&1\rix\\
&=&\mat{ccc|ccc}\mathcal J_{m-2}(1) & e_{m-2} & 0&0 & 0&0\\ 0&1&\frac{1}{2}&-\frac{1}{2}z&\frac{1}{2}&0\\ 0&0&1&0&0&0\\\hline\rule{0mm}{4.5mm}
0&0&0&\big(\mathcal J_{m-2}(1)\big)^{-T}& 0&0\\ 0&0&0&-z&1 & 0\\
0&0&-\frac{1}{2}&\frac{1}{2}z&-\frac{1}{2}&1\rix
\end{eqnarray*}
One easily checks that rank$(S_1-I_{2m})=2m-3$. (Indeed, one can observe that in the corresponding block representation as
above for $S_1-I_{2m}$, the block rows $2,3,5$ and $6$ span a subspace of dimension two, while in addition, there is a zero row in the $4$th block row.)
Thus, the geometric multiplicity of the eigenvalue $1$ of $S_1$ has
increased by one. Furthermore, $1$ is still the only eigenvalue of $S_1$
and hence no block of the form~\eqref{exceptblock}
has been generated in $\widehat S$.

{\it Subcase (1d)}: \emph{The canonical form for $(S,J)$ contains a block of the form~\eqref{cbilkuni1} associated with the
eigenvalue $\lambda=-1$.} In that case, we may assume by Theorem~\ref{cbilkunithm} that $S$ and $J$ are in the form
\[
S=T(-1,1,r_2,\dots,r_{m-1})\oplus \widetilde S,\quad J=\Sigma_m\oplus\widetilde J.
\]
for some  $m=2\ell>0$, where $r_2,\dots,r_{m-1}$ are as in Theorem~\ref{cbilkunithm}. We will first consider the case
$\ell\neq2$, i.e. $m\neq4$. Then taking $u=2i^\ell e_{\ell+1}$, we obtain that
\[
\widehat S:=(I_{2n}+uu^\top J)S=S_1\oplus \widetilde S,
\]
where
\[
S_1=\mat{cccccc}1&&&&&\\[-1ex] &\ddots&&&&\\ &&1&&&\\ &&4&1&&\\[-1ex] &&&&\ddots&\\ &&&&&1\rix\cdot
\mat{ccccc}-1&1&\ast&\ast&\ast\\[-1ex] &-1&\ddots&\ast&\ast\\[-1ex]
&&\ddots&\ddots&\ast\\ &&&-1&1\\ &&&&-1\rix=
\mat{ccc}T&\ast&\ast\\ 0&\mat{cc}-1&1\\-4&3\rix&\ast\\ 0&0&T\rix,
\]
with $T=T(-1,1,r_2,\dots,r_{\ell-2})$. Since the $2\times 2$ block in the middle has a double eigenvalue at $\lambda=1$,
it follows that the geometric multiplicity of the eigenvalue $1$ has increased.
It remains to show that no block of the form~\eqref{exceptblock}
has been generated in $\widehat S$.
In the case $\ell=1$ this is clear as $S_1$ only has the eigenvalue one. If on the other hand $\ell>2$, i.e., $m>4$, then
$S_1$ has the eigenvalue $-1$ with algebraic multiplicity $m-2$. Furthermore,
$T+I_{\ell-1}$ has rank $\ell-2$ which implies that $S_1+I_m$ has at least rank $m-2$, i.e., the geometric
multiplicity of the eigenvalue $-1$ of $S_1$ is at most two. But then $S_1$ cannot have a block of the
form~\eqref{exceptblock}, because such a block already corresponds to
geometric multiplicity two, but only to algebraic multiplicity two
instead of $m-2$.

In the case $\ell=2$, i.e., $m=4$, we will need a slightly different perturbation. In fact, in this case we would have
rank$(S+I_4)=2$ for the perturbation above and this implies that an unwanted block of the form~\eqref{exceptblock} had been generated. Instead, we take $u=e_1+2e_3$ which,
using that by Theorem~\ref{cbilkunithm} we have $r_2=\frac{1}{2}$ and $r_3=0$, gives
\[
S_1=\mat{cccc}1&2&0&1\\ 0&1&0&0\\ 0&4&1&2\\ 0&0&0&1\rix\cdot\mat{cccc}-1&1&\frac{1}{2}&0\\ 0&-1&1&\frac{1}{2}\\ 0&0&-1&1\\ 0&0&0&-1\rix
=\mat{cccc}-1&-1&\frac{5}{2}&0\\ 0&-1&1&\frac{1}{2}\\ 0&-4&3&1\\ 0&0&0&-1\rix
\]
and it is straightforward to check that this matrix has the Jordan canonical form $\mathcal J_2(1)\oplus\mathcal J_2(-1)$.

{\it Subcase (1e)}: \emph{The canonical form for $(S,J)$ contains a block of the form~\eqref{cbilkuni2} associated with the
eigenvalue $\lambda=-1$ having the size $m_j>1$.} In that case, we may assume by Theorem~\ref{cbilkunithm} that $S$ and $J$ are in the form
\[
S=\mat{cc}\mathcal J_{m}(-1)&0\\ 0& \big(\mathcal J_{m}(-1)\big)^{-T}\rix\oplus\widetilde S,\quad
J=\mat{cc} 0 & I_{m}\\ -I_{m} & 0\rix\oplus\widetilde J,
\]
where for simplicity we write $m$ instead of $m_j$, again. Then taking $u=e_{m}-2e_{2m-1}$, we obtain that
\[
\widehat S:=(I_{2n}+uu^\top J)S=S_1\oplus \widetilde S,
\]
where, using the abbreviation $w:=\mat{ccc}1&\dots&1\rix\in\mathbb F^{1,m-2}$, we have
\begin{eqnarray*}
S_1&=&\mat{ccc|ccc}I_{m-2}&0&0&0&0&0\\ 0&1&0&0&0&0\\ 0&2&1&0&0&1\\\hline
0&0&0&I_{m-2}&0&0\\ 0&-4&0&0&1&-2\\ 0&0&0&0&0&1\rix\cdot
\mat{ccc|ccc}\mathcal J_{m-2}(-1) & e_{m-2} & 0&0 & 0&0\\ 0&-1&1&0&0&0\\ 0&0&-1&0&0&0\\\hline\rule{0mm}{4.5mm}
0&0&0&\big(\mathcal J_{m-2}(-1)\big)^{-T}& 0&0\\ 0&0&0&-w&-1 & 0\\
0&0&0&-w&-1&-1\rix\\
&=&\mat{ccc|ccc}\mathcal J_{m-2}(-1) & e_{m-2} & 0&0 & 0&0\\ 0&-1&1&0&0&0\\ 0&-2&1&-w&-1&-1\\\hline\rule{0mm}{4.5mm}
0&0&0&\big(\mathcal J_{m-2}(-1)\big)^{-T}& 0&0\\ 0&4&-4&w&1&2\\
0&0&0&-w&-1&-1\rix.
\end{eqnarray*}
Thus, the characteristic polynomial $p$ of $S_1$ is given by
\[
p=(t+1)^{2m-4}\cdot\left|\begin{array}{cccc}-1-t&1&0&0\\ -2&1-t&-1&-1\\ 4&-4&1-t&2\\ 0&0&-1&-1-t\end{array}\right|
=(t+1)^{2m-2}(t-1)^2
\]
and consequently, the geometric multiplicity of the eigenvalue one has increased.
It remains to show that no block of the form~\eqref{exceptblock}
has been generated in $\widehat S$.
One easily checks that the matrix $S_1+I_{2m}$ has at least rank $2m-2$. (Indeed, in the corresponding block representation as
above, the block rows $1$ and $4$ yield $2m-5$ linearly independent rows already, and adding the rows $2,3,5$ and $6$ to this
set of rows increases the dimension of the corresponding subspace  by three.) Using the same argument as in the first part
of subcase (1d), it follows that $S_1$ cannot have a block of the
form~\eqref{exceptblock}.

We highlight that the (not mutually exclusive) subcases (1a)--(1e) cover all possible cases. Indeed, if none of these cases would apply,
then the canonical form of $(S,J)$ can only consist of blocks of the form~\eqref{cbilkuni2}
associated with the eigenvalue $1$ and size $m_j=1$. But then $S$ is the identity
which is excluded by the assumption $k>0$.

{\it Case (2)}: \emph{The canonical form for $(S,J)$ does have blocks of the form~\eqref{exceptblock}}. In this case, we proceed by considering the following two subcases.

{\it Subcase (2a)}: \emph{$S$ is not similar to a matrix of the form
$I_{2(n-p)}\oplus(-I_{2p})$ for some $p\in\{1,\dots,n\}.$} This means that
the canonical form for $(S,J)$ contains at least one block as considered in the subcases
(1a)--(1e). In other words, $S$ either has an eigenvalue $\lambda$ satisfying $\lambda^2\neq 1$ or at least
one of the eigenvalues $1$ and $-1$ is not semisimple (unless $S=I_{2n}$).

If $S$ has at least one block as in the subcases (1a)--(1c), then
we show that we can proceed using the same strategy as in
Case (1), i.e., increasing the geometric multiplicity of the eigenvalue one in each step, while simultaneously
reducing the number of blocks of the form~\eqref{exceptblock} in each step.
To this end, we may assume without loss of generality that $S$ and $J$ are in the form
\[
S=\check S\oplus\mat{cc}-1&0\\ 0&-1\rix\oplus\widetilde S\quad\mbox{and}\quad
J=\check J\oplus\mat{cc}0&1\\ -1&0\rix\oplus\widetilde J,
\]
where $(\check S,\check J)$ is exactly one block as considered in the subcases (1a)--(1c). Thus, we consider only
one of the blocks of the form~\eqref{exceptblock} at a time,
and we gather the remaining ones (if any) together with all other blocks in the matrices $\widetilde S$ and $\widetilde J$.
Now let $u_1$ be such that
the rank-one perturbation $(I+u_1u_1^\top\check J)\check S=S_1$, where $S_1$ is the perturbed matrix from the
corresponding subcase (1a)-(1c) that has the geometric multiplicity of the eigenvalue $1$ increased by one.
Then setting $u=\mat{ccc}u_1^\top&1&0\rix^\top\in\mathbb C^{2n}$, we obtain that
\[
(I_{2n}+uu^\top J)S=S_2\oplus\widetilde S
\]
with
\[
S_2=\mat{ccc}I+u_1u_1^\top\check J&0&u_1\\ u_1^\top\check J&1&1\\ 0&0&1\rix\cdot\mat{ccc}\check S&0&0\\ 0&-1&0\\ 0&0&-1\rix
=\mat{ccc}S_1&0&-u_1\\ u_1^\top\check J\check S&-1&-1\\ 0&0&-1\rix.
\]
If $\check S$ is as considered in the subcases (1a)--(1c), i.e., its eigenvalues are either $1$ or a pair
$\big(\lambda_0,\frac{1}{\lambda_0}\big)$ of eigenvalues with $\lambda_0^2\neq 1$, then clearly $S_2$ and $S_1$ have the same Jordan structure
on the eigenvalues different from $-1$. This follows immediately from $\operatorname{rank}(S_2-\lambda I)^\ell=\operatorname{rank}(S_1-\lambda I)^\ell+2$
for all $\lambda\neq -1$ and all $\ell\in\mathbb N$. On the other hand, $S_1$ does not have the eigenvalue $-1$ and so we have
$\operatorname{rank}(S_2+I)=\operatorname{rank}S_1+1$ which implies that $S_2$ has the eigenvalue $-1$ with algebraic multiplicity two,
but geometric multiplicity one. This means that one of the blocks of the form~\eqref{exceptblock} has transformed into a block of the form~\eqref{cbilkuni1} having size $m_j=2$.

If on the other hand, the canonical form of $(S,J)$ consists only of
blocks as in the subcases (1d) and (1e) (possibly besides blocks of the
form~\eqref{cbilkuni2} with size $m_j=1$), then we proceed with executing
exactly one step
as described in subcase (1d) or (1e), respectively. Observe that this
generates a block of the form~\eqref{cbilkuni1} associated with the
eigenvalue $1$ and having size $m_j=2$. Since the presence of this block assures that we have not yet finished our induction (the induction
only stops if we have arrived at the identity matrix), we can then use this block to
reduce the number of blocks of the form~\eqref{exceptblock} as described in the previous
paragraph.

So far, we have described how to remove one block of the
form~\eqref{exceptblock}. Observe that this results in producing
one block of the form~\eqref{cbilkuni1} associated with the eigenvalue $-1$ having size $m_j=2$. Thus, we can continue to remove the next block of
the form~\eqref{exceptblock} (if any) by the procedure explained in the
previous paragraph, i.e., by applying a rank-one perturbation
as described in case (1d) which leaves a block of the
form~\eqref{cbilkuni1} having size $m_j=2$ associated with the eigenvalue
$+1$ which can then be used to eliminate the next block of the
form~\eqref{exceptblock}. Consequently, after a finite number of steps, all
blocks of the form~\eqref{exceptblock} will have disappeared and we
can proceed as in Case~(1).

{\it Subcase (2b)}: \emph{$S$ is similar to a matrix of the form
$I_{2(n-p)}\oplus(-I_{2p})$ for some $p\in\{1,\dots,n\}.$} We first show that $S$ cannot be written as a
product of $k$ $J$-symplectic reflectors. Assume that this would be
possible, i.e., there exist $k$ vectors $u_1,\dots,u_k$ such that
\[
S=\prod_{j=1}^k(I_{2n}+u_ju_j^\top J),\quad\mbox{i.e.}\quad
\left(\prod_{j=1}^k(I_{2n}-u_ju_j^\top J)\right)\cdot S=I_{2n}.
\]
Since the geometric multiplicity of each eigenvalue can change
by at most one under a rank-one perturbation, it follows from
rank$(S-I_{2n})=k$ that the matrix
\[
\left(\prod_{j=1}^\ell(I_{2n}-u_ju_j^\top J)\right)\cdot S
\]
must have the eigenvalue $1$ with geometric multiplicity $2n-k+\ell$
for $\ell=1,\dots k$.
Since the matrix $S$ is similar to the matrix $I_{2(n-p)}\oplus(-I_{2p})$,
we see that $S$ has the eigenvalue $1$ with algebraic and geometric
multiplicity $2(n-p)$ and the eigenvalue $-1$ with algebraic and geometric
multiplicity $2p$ which implies $k=2p$. Considering the matrix
$(I_{2n}-u_1u_1^\top J)S$, it follows from the fact that the geometric
multiplicity of the eigenvalue $-1$ can change by at most one that it
can either be $2p-1$ or $2p$ or $2p+1$. In all
three cases, the algebraic multiplicity of the eigenvalue $-1$ is at least
$2p$. (Indeed, in the first case, we cannot have algebraic multiplicity
$2p-1$, because the algebraic multiplicity of the eigenvalue $-1$ always
has to be even. This is in line with Theorem~\ref{thm.rankonepert}, which states that generically
one block of the form~\eqref{cbilkuni2} will change to a block of the
form~\eqref{cbilkuni1}.) But this means that the algebraic multiplicity
(and hence also the geometric multiplicity) of the eigenvalue $1$ can be
at most $2n-2p$ which is a contradiction to the observation that it
should be equal to $2n-2p+1$ after the first rank-one perturbation.
Thus, $S$ cannot be written as a product of $k$ $J$-symplectic reflectors.

On the other hand, we may assume by Theorem~\ref{cbilkunithm} that $(S,J)$ is in the canonical form
\[
S=I_{2(n-p)}\oplus(-I_{2p}),\quad J=\oplus_{j=1}^n\mat{cc}0&1\\ -1&0\rix.
\]
Then choosing $u=e_{2(n-p)+1}$, we obtain that
\[
\check S=(I_{2n}+uu^\top J)S=I_{2(n-p)}\oplus\mat{cc}-1&-1\\ 0&-1\rix\oplus(-I_{2(p-1)}).
\]
This particular rank-one perturbation has left the eigenvalue $1$ with
geometric multiplicity $2(n-p)=2n-k$, but also has generated one block
of the form~\eqref{cbilkuni1} with size $m_j=2$ for the eigenvalue $-1$.
We know from Case (2a) (or Case (1) if $p=1$, respectively) that
$\check S$ can be written as a product of $k$ $J$-symplectic reflectors.
Consequently, $S$ can be written as a product of $k+1$ $J$-symplectic
reflectors. \eproof

\medskip

\begin{remark}\rm
The proof of Theorem~\ref{thm:fac} perfectly explains why only symplectic
matrices similar to a matrix $I_{2(n-p)}\oplus(-I_{2p})$ for some
$p\in\{1,\dots,n\}$ are exceptional. We have shown that any block
$S_j$ of the form~\eqref{cbilkuni2} with size $m_j>1$, or
of the form~\eqref{cbilkuni1} or~\eqref{cbilkuni3} can be written
as a product of rank$(S_j-I)$ $J$-symplectic reflectors. The problem are
blocks of the form~\eqref{exceptblock}, because in that case a rank-one
perturbation cannot reduce the algebraic multiplicity of the eigenvalue.
This is crucial in the case of a matrix similar to
$I_{2(n-p)}\oplus(-I_{2p})$, $p\in\{1,\dots,n\}$, since in that case the
geometric multiplicity of the eigenvalue $1$ cannot be increased. If,
however, other blocks are present in the matrix, then those can be used
to both increase the geometric multiplicity of the eigenvalue $1$ while
simultaneously reducing the geometric multiplicity of the eigenvalue $-1$.

The same strategy can be used to extend the proof to the case
$\mathbb F=\mathbb R$. This, however, would require much more work, because
in this case the canonical form is more complicated. Since the result in
that case is already known in the literature and the purpose of gaining additional insight was already taken
care of by proving the case $\mathbb F=\mathbb C$, we omit the proof in the
real case and refer to \cite{Die55} and \cite{OMe78} instead.
\end{remark}

As a consequence of Theorem~\ref{thm:fac} we can show that sufficiently
small symplectic rank-$k$ perturbations can always be written as products
of $k$ symplectic reflectors.

\begin{corollary}\label{cor:smallpert}
Let $J\in\mathbb F^{2n, 2n}$ be skew-symmetric and invertible and let $S\in\mathbb F^{2n, 2n}$ be
$J$-symplectic. Then there exists $\delta>0$ such that for each $k\in\{1,\dots,2n\}$ and each $J$-symplectic
matrix $\widetilde S\in\mathbb F^{2n, 2n}$ satisfying $\operatorname{rank}(\widetilde S-S)=k$
and $\|\widetilde S-S\|_2<\delta$ there exist vectors $u_1,\dots,u_k\in\mathbb F^{2n}\setminus\{0\}$ and values $c_1,\dots,c_k\in\{1,-1\}$
such that
\[
\widetilde S=\left(\prod_{j=1}^k(I_{2n}+c_ju_ju_j^\top J)\right)\cdot S.
\]
The values $c_j$ can all be chosen to be equal to $+1$ in the case
$\mathbb F=\mathbb C$.
\end{corollary}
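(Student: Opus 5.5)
Set $M:=S^{-1}\widetilde S$. Then $M$ is $J$-symplectic and $\operatorname{rank}(M-I_{2n})=\operatorname{rank}(S^{-1}(\widetilde S-S))=k$. Moreover $\|M-I_{2n}\|_2\le\|S^{-1}\|_2\,\|\widetilde S-S\|_2<\|S^{-1}\|_2\,\delta$. By Theorem~\ref{thm:fac}, $M$ is a product of $k$ $J$-symplectic reflectors $\prod_{j=1}^k(I_{2n}+c_ju_ju_j^\top J)$, with all $c_j=+1$ when $\mathbb F=\mathbb C$, unless $M$ is similar to $I_{2(n-p)}\oplus(-I_{2p})$ for some $p\ge 1$. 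Multiplying such a factorization of $M$ from the left onto $S$ gives the claimed representation of $\widetilde S$. It therefore only remains to choose $\delta$ so that the exceptional case cannot occur.

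If $M$ is similar to $I_{2(n-p)}\oplus(-I_{2p})$ with $p\ge1$, then $-1$ is an eigenvalue of $M$. Hence $M-I_{2n}$ has the eigenvalue $-2$, and so
\[
2\le\rho(M-I_{2n})\le\|M-I_{2n}\|_2 .
\]
Choose $\delta:=2/\|S^{-1}\|_2$; this is positive and independent of $k$ and of $\widetilde S$. Then $\|M-I_{2n}\|_2<2$, which excludes the exceptional case for every $k\in\{1,\dots,2n\}$.

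The real case works the same way. Theorem~\ref{thm:fac} is stated for $\mathbb F=\mathbb R$ as well, with its proof referred to \cite{Die55,OMe78}, and it yields $c_j\in\{1,-1\}$ and real vectors $u_j$. The exceptional condition is again a similarity to $I_{2(n-p)}\oplus(-I_{2p})$, so the same spectral argument applies. The only step that needs any care is this one: $-1$ is an eigenvalue of every exceptional matrix, and this keeps exceptional matrices at distance at least $2$ from the identity in the spectral norm. Everything else follows directly from Theorem~\ref{thm:fac}.
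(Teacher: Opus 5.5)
Your strategy is the paper's: pass to a $J$-symplectic rank-$k$ perturbation of the identity, use smallness to exclude the eigenvalue $-1$ (and with it the exceptional case of Theorem~\ref{thm:fac}), then apply Theorem~\ref{thm:fac}. Your bound $2\le\rho(M-I_{2n})\le\|M-I_{2n}\|_2$ with the explicit choice $\delta=2/\|S^{-1}\|_2$ is a cleaner, quantitative replacement for the paper's appeal to continuity of eigenvalues. It makes the uniformity in $k$ and $\widetilde S$ transparent.

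There is, however, a concrete error in the assembly step. With $M=S^{-1}\widetilde S$ you have $\widetilde S=SM=S\prod_{j=1}^k(I_{2n}+c_ju_ju_j^\top J)$, so the reflectors stand to the \emph{right} of $S$. ``Multiplying the factorization of $M$ from the left onto $S$'' produces $MS$, which in general is not $\widetilde S$, and the statement requires the product on the left.

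The repair is one line. Either work with $M':=\widetilde SS^{-1}$, as the paper does: it is $J$-symplectic, $\operatorname{rank}(M'-I_{2n})=k$, and $\|M'-I_{2n}\|_2\le\|\widetilde S-S\|_2\,\|S^{-1}\|_2$, so your $\delta$ works verbatim. Or keep $M$ and use $JS^{-1}=S^\top J$ (from $S^\top JS=J$). This gives $S(I_{2n}+cuu^\top J)S^{-1}=I_{2n}+c(Su)(Su)^\top J$, hence $\widetilde S=SMS^{-1}\cdot S=\prod_{j=1}^k\bigl(I_{2n}+c_j(Su_j)(Su_j)^\top J\bigr)\cdot S$. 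The vectors $Su_j$ are real when $S$ and $u_j$ are real, and they are nonzero because each $u_j\neq 0$; otherwise $\operatorname{rank}(M-I_{2n})<k$.
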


\proof
Observe that we have $\operatorname{rank}(\widetilde SS^{-1}-I_{2n})=k$ and
$\|\widetilde SS^{-1}-I_{2n}\|<\delta\|S\|^{-1}$. Since eigenvalues depend continuously on the
entries of the given matrix, it follows that for sufficiently small $\delta>0$ all eigenvalues
of $\widetilde SS^{-1}$ are contained in an open ball around $\lambda_0=1$ with radius $2$,
i.e., $\widetilde SS^{-1}$ does not have the eigenvalue $-1$. Then the statement follows immediately
by applying Theorem~\ref{thm:fac}.
\eproof

\begin{remark}\label{rem.para}\rm
Theorem~\ref{thm:fac} allows us to parameterize a large class of symplectic perturbations of rank at
most $k$ by a polynomial map. Setting
\begin{equation}\label{PU}
P(U)=\left(\prod_{j=1}^k(I_{2n}+u_ju_j^\top J)-I_{2n}\right)S,\quad \mbox{where }U=\mat{ccc}u_1&\dots&u_k\rix\in\mathbb C^{2n,k}
\end{equation}
we obtain that $S+P(U)=\prod_{j=1}^k(I_{2n}+u_ju_j^\top J)S$ is a symplectic perturbation of $S$
such that $P(U)$ has rank at most $k$. Obviously, $P(U)$ is a polynomial map in the $2nk$ complex entries of $U$ as
well as in the $4nk$ real and imaginary parts of the entries of $U$. We highlight that
it is not guaranteed that $P(U)$ has rank exactly $k$ for a given $U\in\mathbb C^{2n,k}$.
In fact, it may well be that the rank of $P(U)$ is less than $k$. However, it follows from
\cite[Lemma 2.1]{MMRR2} that the set of all $U$ for which $P(U)$ has rank $k$ is a generic
subset of $\mathbb C^{2n,k}$.
\end{remark}

The exceptional matrices from Theorem~\ref{thm:fac} cannot be
parameterized as in~\eqref{PU}. Fortunately, the set of these matrices
is a rare set in the following topological sense.

\begin{corollary}\label{cor.rare}
Let $J\in\mathbb F^{2n,2n}$ be skew-symmetric and invertible. Then
the set $\mathcal S$ of $J$-symplectic matrices that are similar to a matrix of the form $I_{2(n-p)}\oplus (-I_{2p})$ for some $p=1,\dots,n$ 
is a closed nowhere dense subset of measure zero of the set of $J$-symplectic matrices.
\end{corollary}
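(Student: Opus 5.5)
We prove the statement by describing $\mathcal S$ explicitly. A matrix similar to $I_{2(n-p)}\oplus(-I_{2p})$ is diagonalizable with spectrum in $\{1,-1\}$. Hence $\mathcal S\cup\{I_{2n}\}$ is exactly the set of $J$-symplectic $S$ with $S^2=I_{2n}$. Indeed, $S^2=I$ forces diagonalizability with eigenvalues $\pm1$. Moreover, $J$-symplecticity forces the $-1$ eigenspace to have even dimension, by Theorem~\ref{cbilkunithm} (or Theorem~\ref{symmunitthm} in the real case): blocks for $-1$ come either in pairs \eqref{cbilkuni2} of size $m_j=1$, or as even-sized non-semisimple blocks, and the latter are excluded. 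Thus
\[
\mathcal S=\{S:\ S^\top JS=J,\ S^2=I_{2n}\}\setminus\{I_{2n}\}.
\]
Since $I_{2n}$ is an isolated point of the involutions (any other involution $S$ satisfies $\|S-I\|\ge 2$, because $S-I$ has the eigenvalue $-2$), $\mathcal S$ is the intersection of the group of $J$-symplectic matrices with an algebraic set, minus an isolated point. In particular $\mathcal S$ is closed.

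For nowhere density and measure zero, I would use that the $J$-symplectic group $G$ is a connected real (or complex) Lie group, hence an irreducible real-analytic manifold; for $\mathbb F=\mathbb C$ it is an irreducible algebraic variety. The set $\mathcal S$ is contained in the zero set within $G$ of the polynomial map $S\mapsto S^2-I$. This map is not identically zero on $G$: for instance $T(1,1,r_2,\dots)$-type elements, or simply a symplectic reflector $I+uu^\top J$ with $u\neq0$, satisfy $(I+uu^\top J)^2=I+2uu^\top J\neq I$. A real-analytic function on a connected analytic manifold that does not vanish identically has a zero set that is closed, nowhere dense and of measure zero (with respect to the Haar/Lebesgue measure in local charts). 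Applying this to a single nonvanishing entry of $S^2-I$ gives the claim.

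An alternative, more elementary route is by dimension count. Each conjugacy class $\{P(I_{2(n-p)}\oplus -I_{2p})P^{-1}\}$ within $G$ is an orbit $G/(Sp_{2(n-p)}\times Sp_{2p})$ of dimension $8p(n-p)$ (complex), which is strictly less than $\dim G=n(2n+1)$. There are finitely many $p$, so $\mathcal S$ is a finite union of lower-dimensional smooth submanifolds, and therefore has measure zero and empty interior; together with closedness this gives the statement.

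The main obstacle is making the measure-zero/nowhere-dense step rigorous within the manifold $G$ rather than in $\mathbb F^{2n,2n}$, since $G$ itself has measure zero there. I would handle this by invoking the standard fact about zero sets of nontrivial analytic functions on connected analytic manifolds, together with connectedness of $Sp_{2n}(\mathbb F)$. Connectedness is needed so that nonvanishing at one point (the reflector) implies nonvanishing on every component.
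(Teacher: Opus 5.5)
Your proof is correct, but it is organized differently from the paper's.

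For closedness, the paper takes a convergent sequence in $\mathcal S$ and uses continuity of eigenvalues and lower semicontinuity of the rank. The geometric multiplicities of $+1$ and $-1$ in the limit are then at least $2(n-p)$ and $2p$, and since these add up to $2n$, the limit is again similar to $I_{2(n-p)}\oplus(-I_{2p})$. You instead identify $\mathcal S$ as the set of $J$-symplectic involutions other than $I_{2n}$, i.e. an algebraic set minus an isolated point. This is cleaner and avoids semicontinuity arguments entirely. The evenness of the $-1$-eigenspace also follows directly from $\det S=1$, so the canonical form is not really needed there.

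For nowhere density, the paper argues separately that every element of $\mathcal S$ can be approximated by symplectic matrices with eigenvalues different from $\pm1$. You obtain nowhere density and measure zero together from one fact: the zero set of a nontrivial analytic function on a connected analytic manifold is nowhere dense and null.

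Your measure-zero step is essentially the paper's idea. Both cut $\mathcal S$ out of the symplectic group by one extra polynomial equation: $\det(S+I)=0$ in the paper, a nonvanishing entry of $S^2-I$ in yours. You make explicit the connectedness (irreducibility) of the $J$-symplectic group, which the paper's appeal to ``hypersurfaces have measure zero'' tacitly requires. In exchange, you rely on a standard analytic lemma, whereas the paper stays closer to its perturbation-theoretic viewpoint.

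One small slip in your alternative dimension count: the complex dimension of the orbit is $\dim Sp_{2n}-\dim Sp_{2(n-p)}-\dim Sp_{2p}=4p(n-p)$, not $8p(n-p)$; the latter is its real dimension. The conclusion is unaffected, since $4p(n-p)\le n^2<n(2n+1)$.
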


\proof
The closedness easily follows by a continuity argument. Indeed, if $(S_k)_{k\in\mathbb N}$
is a sequence in $\mathcal S$ with limit $S$ then $S$ is a symplectic matrix with
spectrum contained in $\{+1,-1\}$ due to the closedness of the symplectic groups and
the continuity of eigenvalues. Due to the lower semi-continuity of the rank it follows
that there exist $g_1,g_{-1}\in\mathbb N$ such that the geometric multiplicity of the eigenvalue $+1$ is at least $g_1$ and the one of the eigenvalue $-1$ at least $g_{-1}$
for $S$ and almost all $S_k$. Since the matrices $S_k$ are from $\mathcal S$ it follows
that $g_{-1}=2p$ and $g_1=2(n-p)$ and since these two values add up to $2n$, the only
situation that $S$ can attain these two geometric multiplicities at $+1$ and $-1$ is
when $S$ is similar to the matrix $I_{2(n-p)}\oplus (-I_{2p})$.

Next, we show that $\mathcal S$ has empty interior. But this is clear, because
if $S$ is any symplectic matrix similar to a matrix of the form $I_{2(n-p)}\oplus (-I_{2p})$
then there exist symplectic matrices arbitrarily close to $S$ that have eigenvalues
different from $\pm 1$, so $S$ is not an interior point of $\mathcal S$.

The remaining parts follow immediately from the fact that $\mathcal S$ is contained in the set of $J$-symplectic matrices $S$
having the eigenvalue $-1$ which is a hypersurface (i.e. subvariety of codimension one) in the variety of symplectic matrices.
(Indeed, the set of $J$-symplectic matrices is defined by the equation $S^\top JS-J=0$ that
consists of equations that are polynomials in the entries of $S$. The mentioned subset of $J$-symplectic matrices is then a proper subset 
that is obtained by adding one more polynomial equation, namely $\det(S+I)=0$.) 
It is well known that hypersurfaces of algebraic varieties are sets of measure zero in the given variety.
\eproof

\medskip

Thus, we conclude that the set of symplectic matrices
$S$ that can be written as a product of $\operatorname{rank}(S-I_{2n})$ symplectic
reflectors is \emph{generic} in the sense of Corollary~\ref{cor.rare}. Within
this set, we can now investigate the generic behavior of symplectic matrices under
low-rank perturbations using the parametrization from Remark~\ref{rem.para}. This is
done in the following section.

\section{Structure-preserving low-rank perturbation of symplectic matrices}

In this section we will state the main result of the paper on the behavior of the Jordan structure of symplectic matrices
under symplectic rank-$k$ perturbations.
For its proof we need the following auxiliary result which is a minor adaptation of \cite[Lemma 3.1]{MR17}.

\begin{theorem}\label{local}
Let $A\in\mathbb C^{n,n}$ and let the Jordan canonical form of $A$ be
given by
$$
\mathcal{J}_{n_1}(\lambda) \oplus \cdots \oplus \mathcal{J}_{n_g}(\lambda)\oplus\tilde{\mathcal{J}},
$$
with $n_1 \geq \cdots \geq n_g$ and $\lambda\not\in\sigma(\tilde{\mathcal{J}})$.
Furthermore, let $P\in\mathbb C^{n,n}[t_1,\dots,t_r]$ be a matrix whose entries are polynomials
in the independent indeterminate variables $t_1,\dots,t_r$. Assume that for all $u=(u_1,\dots,u_r)\in\mathbb F^r$
we have
\begin{enumerate}
\item[{\rm (i)}] $\operatorname{rank}P(u)\leq k $;
\item[{\rm (ii)}] the algebraic multiplicity of any eigenvalue of $A+P(u)$ is always a multiple of
$\nu\in\mathbb N\setminus\{0\}$.
\end{enumerate}
Then the following statements hold:
\begin{enumerate}
\item
For each $(u_1,\dots,u_r)\in\mathbb F^r$ there exist integers $\eta_1 \geq \cdots \geq \eta_\ell$
such that
\begin{enumerate}
\item[{\rm (a)}] the Jordan canonical form of $A+P(u_1,\dots,u_r)$ is given by
$$
\mathcal{J}_{\eta_1}(\lambda) \oplus \cdots \oplus \mathcal{J}_{\eta_\ell}(\lambda)\oplus\check{\mathcal{J}},
$$
where  $\lambda\not\in\sigma(\check{\mathcal{J}})$,
\item[{\rm (b)}] $(\eta_1, \ldots , \eta_\ell)$ dominates $(n_{k +1}, \ldots , n_g)$; that is, we have
$\ell\geq g-k $ and $\eta_j \geq n_{j+k }$ for $j=1, \ldots , g-k $.
\end{enumerate}
\item
Assume that for all $u=(u_1,\dots,u_r)\in\mathbb F^r$ the algebraic multiplicity $a_{u}$ of $\lambda$
as an eigenvalue of $A+P(u)$ satisfies $a_{u}\geq a$ for some $a\in\mathbb{N}$.
If there exists $u_0\in\mathbb F^r$ such that $a_{u_0} =a$, then the set
$$
\Omega =\{ u\in\mathbb F^r\mid a_{u}=a\}
$$
is a generic set.
\item
Assume that for any $\varepsilon>0$ there exists $u_0\in\mathbb F^r$ with $\|u_0\|<\varepsilon$
such that the Jordan form of $A+P(u_0)$ is described by
\begin{itemize}
\item[{\rm (a)}] $
\mathcal{J}_{n_{k +1}}(\lambda) \oplus \cdots \oplus \mathcal{J}_{n_g}(\lambda)\oplus\check{\mathcal{J}}, \qquad \lambda\not\in\sigma(\check{\mathcal{J}}),
$
\item[{\rm (b)}] all eigenvalues that are not eigenvalues of $A$ have multiplicity $\nu$ precisely.
\end{itemize}
Then there exists a generic set $\Omega\subseteq\mathbb F^r$ such that the Jordan canonical form of
$A+P(u)$ is described by {\rm (a)} and {\rm (b)} for all $u\in\Omega$.
\end{enumerate}
\end{theorem}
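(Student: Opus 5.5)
We plan to follow the proof of \cite[Lemma 3.1]{MR17} and only adapt it where hypothesis (ii) matters; the three parts are handled in order.

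\emph{Part 1.} Fix $u$. By (i), $\operatorname{rank}P(u)\le k$, so
\[
\operatorname{rank}(A+P(u)-\lambda I)^j\ge \operatorname{rank}(A-\lambda I)^j-\operatorname{rank}\big((A+P(u)-\lambda I)^j-(A-\lambda I)^j\big).
\]
Expanding the power, the difference lies in the span of terms that each contain a factor $P(u)$. We will use the standard sharper estimate from the Hörmander--Melin / Savchenko argument: the number of Jordan blocks of $A+P(u)$ at $\lambda$ of size $\ge j$ is at least (number of blocks of $A$ of size $\ge j$) $-\,k$. Equivalently,
\[
\dim\ker(B-\lambda I)^j-\dim\ker(B-\lambda I)^{j-1}\ge \#\{i: n_i\ge j\}-k,\qquad B=A+P(u).
\]
This is proved via the Weyr characteristic, the restricted operator on the invariant subspace, and the fact that $\ker P(u)$ has codimension $\le k$; it gives exactly the dominance in (b). We intend to quote it as in \cite{HorM94,MR17} rather than reprove it.

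\emph{Part 2.} The coefficients of $\det(tI-A-P(u))$ are polynomials in $u$. Write this determinant as $(t-\lambda)^{a}q_u(t)$; the divisibility by $(t-\lambda)^a$ holds for all $u$ by assumption. The condition $a_u=a$ is equivalent to $q_u(\lambda)\ne0$. Now $q_u(\lambda)$ equals, up to a nonzero constant, the $a$-th derivative of the characteristic polynomial at $\lambda$, so it is a polynomial in $u$. It is nonzero at $u_0$, hence its zero set is a proper algebraic set, and the complement $\Omega$ is generic.

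\emph{Part 3.} This is the main step.
\begin{itemize}
\item[(a)] The $u_0$ in the hypothesis attains $a:=\sum_{j>k}n_j$. By Part 1 and dominance, $a_u\ge a$ for all $u$, so Part 2 yields a generic set where $a_u=a$.
\item[(b)] On that set, dominance with equal total size forces the partition at $\lambda$ to be exactly $(n_{k+1},\dots,n_g)$.
\item[(c)] Apply Part 2 to the other eigenvalues $\mu$ of $A$: near $u=0$ the eigenvalue $\mu$ behaves similarly, giving generic sets for each eigenvalue of $A$.
\item[(d)] For the new eigenvalues, consider the polynomial $r_u(t)=\det(tI-A-P(u))/\prod_{\mu\in\sigma(A)}(t-\mu)^{a_\mu}$ on the generic set from (c). Its coefficients are polynomial (rational with constant denominators) in $u$. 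By (ii), every root of $r_u$ has multiplicity divisible by $\nu$.
\item[(e)] Write $r_u=s_u^{\nu}$ generically. Condition (b) says that $s_u$ has distinct roots, i.e.\ the discriminant of $s_u$ does not vanish, and this holds at $u_0$.
\end{itemize}
The main obstacle is that $s_u$ is not obviously polynomial in $u$. We would instead express "all roots of $r_u$ have multiplicity exactly $\nu$" as the non-vanishing of a suitable subresultant of $r_u$ and $r_u^{(\nu)}$. Equivalently, we would use $\deg\gcd(r_u,r_u')=\deg r_u-\deg r_u/\nu$, which is the minimum possible under (ii). That condition is the non-vanishing of a determinantal minor of the Sylvester matrix, hence a polynomial in $u$, and it is nonzero at $u_0$. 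Intersecting finitely many generic sets gives $\Omega$. For $\mathbb F=\mathbb R$, Lemma~\ref{lem.c2r} transfers genericity.
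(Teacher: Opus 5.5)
Your Parts 1 and 2 are fine. The paper itself gives no argument here and only refers to the literature. Quoting the rank-$k$ interlacing $\eta_j\ge n_{j+k}$ is legitimate. With $p_u(t)=\det(tI-A-P(u))$, the condition $a_u=a$ is the nonvanishing of the polynomial $u\mapsto p_u^{(a)}(\lambda)$. In Part 3, steps (a), (b), (d) are correct. Step (c) is fine if you simply set $a_\mu:=\min_u a_{u,\mu}$ and invoke Part 2; the remark ``near $u=0$ the eigenvalue $\mu$ behaves similarly'' is unjustified and unnecessary.

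The gap is the assertion in (e) that your Sylvester minor is nonzero at $u_0$. The hypothesis controls $A+P(u_0)$ only at $\lambda$ and at eigenvalues outside $\sigma(A)$. An eigenvalue $\mu\neq\lambda$ of $A$ may have multiplicity $a_{u_0,\mu}>a_\mu$, so $u_0$ need not lie in your generic set from (c). Then $r_{u_0}$ contains the factor $(t-\mu)^{a_{u_0,\mu}-a_\mu}$, and the minor can vanish at $u_0$. That you never use $\|u_0\|<\varepsilon$ is a symptom.

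This cannot be patched, because Part 3 as printed is false. Take $\nu=1$, $k=r=2$, $\lambda=0$ and
\[
A=\mat{ccc}0&0&0\\0&1&1\\0&0&1\rix,\qquad P(s,t)=\mat{ccc}s&0&0\\0&0&0\\0&-t^2&2t\rix,\qquad \det\big(xI-A-P(s,t)\big)=(x-s)(x-1-t)^2 .
\]
Here $\operatorname{rank}P(s,t)\le 2$. For $u_0=(s,0)$ with $0<|s|<\min\{\varepsilon,1\}$, the eigenvalue $0$ has disappeared and the only new eigenvalue $s$ is simple, so (a) and (b) hold. But for every $(s,t)$ with $t\notin\{0,-1\}$, the number $1+t\notin\sigma(A)$ is an eigenvalue of multiplicity at least two. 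So (b) fails on a set that is not contained in any proper algebraic set, and no generic $\Omega$ exists. For $\mu=1$ one has $a_\mu=0$ but $a_{u_0,\mu}=2$, which is exactly where your step (e) breaks.

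What is missing is a point at which every eigenvalue $\mu$ of $A$ simultaneously attains its minimal multiplicity $a_\mu$ and the new eigenvalues have multiplicity exactly $\nu$. Suppose you add this to the hypothesis, for instance by requiring (a) at $u_0$ for every eigenvalue of $A$, each with its own partial multiplicities. Then $u_0$ lies in your generic set and (c)--(e) go through verbatim.

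If such conditions are only available for each eigenvalue at separate points, the smallness of $u_0$ is what lets you merge them. For small $\|u\|$ the spectrum of $A+P(u)$ splits into disjoint clusters around the eigenvalues of $A$, with cluster characteristic polynomials depending analytically on $u$. The good behaviour in each cluster is the nonvanishing of an analytic function that is not identically zero on a small ball. Hence there is a common good point $u^\ast$, which you then use in place of $u_0$ in (e).
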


\proof
The proof of Theorem~\ref{local} for the case $\mathbb F=\mathbb R$ was given in
\cite{MR17}. For the case $\mathbb F=\mathbb C$ the proof is completely identical
and therefore omitted.
\eproof

\begin{remark}\rm
\begin{enumerate}
\item The first part of Theorem~\ref{local} classifies restrictions to the Jordan structure
of matrices after a general (not necessarily generic) rank-$k$ perturbation. Part~2
states that the minimal possible algebraic multiplicity that can be obtained under
a general perturbation of a specific rank is also the generic one. Thus, to prove that a particular
algebraic multiplicity is the one that is obtained after a generic perturbation, it
suffices to prove that it is a lower bound for all possible algebraic multiplicities
under general perturbations and to then find one particular example, where this
algebraic multiplicity is attained. The third part deals with the generic
multiplicity of newly generated eigenvalues. The value $\nu$ will be equal to one in
most applications (i.e., newly generated eigenvalues will generically be simple)
and that is how we will apply the result in this paper.
The result is stated in a more general way, because there are some instances when the
result is applied with $\nu=2$, e.g. when the considered set is the set of skew-Hamiltonian matrices. (It is well known
that all eigenvalues of skew-Hamiltonian matrices appear with even algebraic multiplicity.)
\item Observe that both the entries of the matrix $A$ and the image of the
function $P$ are assumed to be complex. Since $A$ and $P$ are fixed, the result can also be
applied in the case that $A$ is real and $P$ is real-valued, because $\mathbb R$ is
contained in $\mathbb C$. In contrast, observe that the more
general symbol $\mathbb F\in\{\mathbb C,\mathbb R\}$ was used for the parameters
to be inserted in the polynomial function $P$. In this paper, we will only use
complex parameters and thus apply the result for the case $\mathbb F=\mathbb C$ only.
The reason why the statement of Theorem~\ref{local} is formulated in a more general
way is the potential application in cases where complex conjugation appears.
We explain the issue for the theory of rank-one perturbations of Hermitian
$n\times n$ matrices. Here, the corresponding structure-preserving
rank-one perturbations take the form $P(u)=uu^\ast$ with $u\in\mathbb C^n$ and it
turns out that $P$ is \emph{not} a polynomial map in the complex entries of $u$ as
complex conjugation is involved. However, considering the real and imaginary parts
of each entry of $u$, we find that $P(u)$ is a polynomial in the
$2n$ real variables that are obtained in that way. In this particular case, it is
necessary to consider real parameters even in the case when complex matrices
are considered.
\end{enumerate}
\end{remark}

The second auxiliary result that we will need is \cite[Lemma 2.2]{Bat14b}, see also \cite[Lemma 2.1]{BatMRR16}.

\begin{lemma}\label{notalg}
Let $\mathcal B\subseteq\mathbb F^\ell$ not be contained in any proper algebraic subset of $\mathbb F^\ell$.
Then, $\mathcal B\times\mathbb F^k$ is not contained in any proper algebraic subset of $\,\mathbb F^\ell\times\mathbb F^k$.
\end{lemma}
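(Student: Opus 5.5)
We argue by contraposition: assume $\mathcal B\times\mathbb F^k\subseteq\mathcal A$ for some proper algebraic set $\mathcal A\subseteq\mathbb F^\ell\times\mathbb F^k$, and derive that $\mathcal B$ lies in a proper algebraic subset of $\mathbb F^\ell$. Write $\mathcal A$ as the common zero set of polynomials $p_1,\dots,p_s$ in the variables $(x,y)$ with $x\in\mathbb F^\ell$, $y\in\mathbb F^k$. Because $\mathcal A$ is proper, at least one of them, say $p=p_1$, is not the zero polynomial. It is enough to work with this single $p$. Indeed, $\mathcal B\times\mathbb F^k\subseteq\mathcal A\subseteq\{p=0\}$.

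Expand $p$ in the $y$-variables:
\[
p(x,y)=\sum_{\alpha} q_\alpha(x)\,y^\alpha ,
\]
where the sum runs over finitely many multi-indices $\alpha\in\mathbb N^k$ and the $q_\alpha$ are polynomials in $x$. Since $p\neq 0$, some coefficient $q_{\alpha_0}$ is not the zero polynomial.

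Now fix $b\in\mathcal B$. Then $y\mapsto p(b,y)$ vanishes on all of $\mathbb F^k$. Since $\mathbb F\in\{\mathbb R,\mathbb C\}$ is infinite, a polynomial vanishing on all of $\mathbb F^k$ is the zero polynomial; this follows by a standard induction on $k$ using that a nonzero univariate polynomial has finitely many roots. Hence all coefficients vanish: $q_\alpha(b)=0$ for every $\alpha$, and in particular $q_{\alpha_0}(b)=0$.

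Therefore $\mathcal B\subseteq\{x\in\mathbb F^\ell : q_{\alpha_0}(x)=0\}$. This is an algebraic set, and it is proper because $q_{\alpha_0}\neq0$ and, again by infiniteness of $\mathbb F$, a nonzero polynomial does not vanish identically on $\mathbb F^\ell$. This contradicts the hypothesis on $\mathcal B$.

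The only point requiring care is the identity principle, namely that a polynomial vanishing on $\mathbb F^k$ is zero. This is used twice, and it is routine for infinite fields. Note also that properness of an algebraic set is equivalent to the defining polynomials not all being zero, as remarked in the preliminaries.
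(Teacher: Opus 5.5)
Your proof is correct and complete. A nonzero $p$ vanishing on $\mathcal B\times\mathbb F^k$ forces every $y$-coefficient $q_\alpha$ to vanish on $\mathcal B$, and any nonzero $q_{\alpha_0}$ then cuts out a proper algebraic set containing $\mathcal B$. Both of your uses of the identity principle are legitimate because $\mathbb F\in\{\mathbb R,\mathbb C\}$ is infinite.

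The paper gives no proof of this lemma; it simply imports it from \cite{Bat14b} (see also \cite{BatMRR16}). So your argument supplies a self-contained justification rather than competing with one in the text.

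If you want a shorter route, slice instead of expanding in $y$. Since $\mathcal A\neq\mathbb F^{\ell}\times\mathbb F^{k}$, pick a point $(x_0,y_0)\notin\mathcal A$. The slice $\{x\in\mathbb F^\ell : (x,y_0)\in\mathcal A\}$ is the common zero set of the polynomials $p_1(\cdot,y_0),\dots,p_s(\cdot,y_0)$ in $x$. It contains $\mathcal B$, because $\mathcal B\times\{y_0\}\subseteq\mathcal A$, and it misses $x_0$, so it is proper.

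This version needs neither the coefficient expansion nor the identity principle. It works verbatim over any field with the paper's definition of ``proper'' ($\mathcal A\neq\mathbb F^m$). Your version gives slightly more than is needed: it locates $\mathcal B$ inside $\{x : p(x,\cdot)\equiv 0\}$, which does not depend on choosing a point.
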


With the factorization of rank-$k$ perturbations of symplectic matrices from Section~\ref{sec:fac}, we
are now able to describe the generic behavior of symplectic matrices under structure-preserving
rank-$k$ perturbations.

\begin{theorem}\label{mainresult}
Let $J\in\mathbb F^{2n, 2n}$ be skew-symmetric and invertible and let $S\in\mathbb F^{2n, 2n}$
be $J$-symplectic. Furthermore let $k\in\mathbb N\setminus\{0\}$ and for $j=1,\dots,k$ let $c_j=1$ if $\mathbb F=\mathbb C$, and
$c_j\in\{1,-1\}$ if $\mathbb F=\mathbb R$. Then, there exists a generic set $\Omega_k\subseteq\mathbb F^{n, k}$
such that for all $U\in\Omega_k$ the matrix
\[
\widetilde S=\left(\prod_{j=1}^k(I_{2n}+c_ju_ju_j^\top J)\right)\cdot S,\quad c_j\in\{1,-1\}
\]
satisfies the following statements:
\begin{itemize}
\item[${\rm (1)}$] Let $\lambda\in\mathbb C$ be any eigenvalue of $S$, and let it have geometric multiplicity $g$
and partial multiplicities $n_1\geq n_2\geq \cdots\geq n_g$. If $k<g$ then the following statements hold:
\begin{itemize}
\item[${\rm (1a)}$] If $\lambda\neq 1,-1$ or if $\lambda\in\{1,-1\}$ and $n_1+\dots+n_k$ is even,
then $\lambda$ is an eigenvalue of $\widetilde S$ having precisely the partial multiplicities
$n_{k+1}\geq n_{k+2}\geq\cdots\geq n_{g}$.
\item[${\rm (1b)}$] If $\lambda\in\{1,-1\}$ and $n_1+\dots+n_k$ is odd,
then $\lambda$ is an eigenvalue of $\widetilde S$ having precisely the partial multiplicities
$n_{k+1}+1\geq n_{k+2}\geq\cdots\geq n_{g}$.
\end{itemize}
If, on the other hand, $k\geq g$, then $\lambda$ is not an eigenvalue of $\widetilde S$.
\item[${\rm (2)}$] If $\mu\in\mathbb C$ is an eigenvalue of $\widetilde S$, but not of $S$,
then $\mu$ is a simple eigenvalue of $\widetilde S$.
\end{itemize}
\end{theorem}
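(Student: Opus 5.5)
We argue by induction on $k$. The base case $k=1$ is Theorem~\ref{thm.rankonepert}. For the step we write $\widetilde S=(I_{2n}+c_ku_ku_k^\top J)\widehat S$ with $\widehat S=\prod_{j=1}^{k-1}(I_{2n}+c_ju_ju_j^\top J)S$. Theorem~\ref{thm.rankonepert} is then applied to $\widehat S$, and Theorem~\ref{local} together with Lemma~\ref{notalg} turns the resulting statement, which only holds on a $\widehat S$-dependent generic set, into one holding on a single generic set of $U=[u_1,\dots,u_k]$. By Lemma~\ref{lem.c2r} it suffices to treat complex parameters, with the real case following by intersecting with $\mathbb R^{2n,k}$ (for fixed signs $c_j$ the map $U\mapsto\widetilde S$ is still polynomial).

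\emph{Combinatorics of one step.} Assume the claim for $k-1$ on a generic set $\Omega_{k-1}$. Fix $U'=[u_1,\dots,u_{k-1}]\in\Omega_{k-1}$ and let $\lambda$ be an eigenvalue of $S$ with $g>k-1$. Then $\widehat S$ has at $\lambda$ the partial multiplicities $n_k,\dots,n_g$ if $\lambda\neq\pm1$ or $n_1+\dots+n_{k-1}$ is even. Otherwise it has $n_k+1,n_{k+1},\dots,n_g$, and $n_k+1$ is still the largest of these. Theorem~\ref{thm.rankonepert} applied to $\widehat S$ removes the largest block (or, for $\lambda=\pm1$ with an odd largest block, removes it and adds one to the next one). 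This yields the stated result, and the parity bookkeeping is consistent. In the even case with $n_k$ odd, the new odd sum $n_1+\dots+n_k$ leads to $n_{k+1}+1$. In the odd case the largest block is $n_k+1$: if $n_k$ is even, then $n_k+1$ is odd and we get $n_{k+1}+1$, matching the odd sum $n_1+\dots+n_k$; if $n_k$ is odd, then $n_k+1$ is even, the sum becomes even and we get $n_{k+1}$. Along the way we must check the fact that for $\lambda=\pm1$ the sum $n_1+\dots+n_k$ being odd forces $g>k$, by the pairing of odd blocks in Theorem~\ref{cbilkunithm}, so that the block $n_{k+1}$ exists. If $g=k$, one application removes the last block. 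If $g<k$, $\lambda$ already disappeared earlier and we must make sure it is not regenerated; newly created eigenvalues are handled by part (2) at each stage, and a simple new eigenvalue is destroyed or kept simple generically.

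\emph{Genericity.} The argument above gives, for each $U'\in\Omega_{k-1}$, a generic set $\Omega(U')\subseteq\mathbb C^{2n}$ of admissible $u_k$, so the good set contains $\bigcup_{U'}\{U'\}\times\Omega(U')$. This union need not obviously contain a generic set, so I would use Theorem~\ref{local} with $A=S$, $P(U)=\widetilde S-S$ from \eqref{PU}, rank at most $k$ and $\nu=1$. By part~3 (or parts~1 and 2 for the algebraic multiplicities), it suffices to exhibit \emph{one} $U_0$, arbitrarily small, realizing the desired Jordan structure at each eigenvalue and simple new eigenvalues. Lemma~\ref{notalg} shows that the set of good $U$ is not contained in any proper algebraic set, which is exactly what is needed to pass from the pointwise statements to a generic set. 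Because the required structure for $\lambda=\pm1$ in case (1b) is not the minimal-multiplicity structure of Theorem~\ref{local} (an extra $+1$ appears), I would instead use part~1 for an upper bound on the block sizes, and rank conditions $\operatorname{rank}(\widetilde S-\lambda I)^m$, which are lower semicontinuous and thus generic once attained at one point, to pin down the partial multiplicities exactly.

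\emph{Main obstacle.} The hardest step is this passage from ``for each $U'$, generically in $u_k$'' to ``generically in $U$''. The detailed Jordan structure is characterized by equalities of ranks of powers, and generic attainment of a maximal rank is automatic once it is attained at one $U$. So the real work is to show that the target rank profile is the maximal attainable one over all $U$ and that it is attained. Attainment follows from the inductive construction (choose $U'\in\Omega_{k-1}$, then $u_k\in\Omega(U')$); maximality follows from Theorem~\ref{local}(1) combined with the symplectic parity constraints from Theorem~\ref{cbilkunithm}, which forbid the smaller profile in case (1b).
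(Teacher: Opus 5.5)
Your route works, but it makes the statement generic by a different mechanism than the paper. Both proofs build the realizing perturbation the same way: successive generic reflectors with the parity bookkeeping (the paper also keeps it small so that part~3 of Theorem~\ref{local} applies). The difference is case (1b). The paper first uses part~2 to make the algebraic multiplicity $a=n_{k+1}+\cdots+n_g+1$ generic. It then enumerates the lists compatible with domination, multiplicity $a$ and the pairing of odd blocks. In Subcase~(3b) it excludes the competitor $(n_{k+1},\dots,n_{k+s},n_{k+s+1}+1,\dots,n_g)$ by contradiction: it applies one more reflector on $\mathcal B\times\mathbb C^{2n}$ and invokes Case~2 for $k+1$ via Lemma~\ref{notalg}.

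You instead observe that the target profile maximizes every $\operatorname{rank}(\widetilde S-\lambda I)^m$ simultaneously over all $U$, so one attained point makes it generic. This treats (1a) and (1b) uniformly, with no case split, no smallness requirement and no Lemma~\ref{notalg}; for $(7,7,7,6,3,3,1,1)$ the competitor simply has smaller rank at $m=7$. The paper's route, in exchange, uses Theorem~\ref{local} purely as a black box.

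Yours requires proving the maximality, which you only assert. Let $\tau$ be the target list and $\eta$ any attainable list.
\begin{itemize}
\item For $m\le n_{k+1}$, domination gives $\sum_j\min(\eta_j,m)\ge\sum_{j>k}\min(n_j,m)=\sum_j\min(\tau_j,m)$.
\item For $m>n_{k+1}$, the right-hand side equals $a$. The left-hand side is $\ge a$: by domination if $\eta_1>m$, and by evenness of $\sum_j\eta_j$ otherwise.
\end{itemize}

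Tidy three loose ends as well.
\begin{itemize}
\item Lemma~\ref{notalg} is about products and does not apply to your fibred set $\bigcup_{U'}\{U'\}\times\Omega(U')$. It is not needed in your final argument.
\item For $g<k$, part~(2) of Theorem~\ref{thm.rankonepert} only says a reappearing $\lambda$ would be simple. So remove from $\Omega(U')$ the set of $u_k$ with $\det\big((I_{2n}+c_ku_ku_k^\top J)\widehat S-\lambda I_{2n}\big)=0$. This is a proper algebraic set, since it misses $u_k=0$.
\item Part~3 of Theorem~\ref{local} does not cover (1b), so encode statement~(2) directly. Let $a_\lambda$ be the target multiplicities and $D(t)=\prod_{\lambda\in\sigma(S)}(t-\lambda)^{a_\lambda}$. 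By the lower bounds above, $D$ divides $\det(tI_{2n}-\widetilde S)$ for every $U$. Requiring the quotient to have nonzero discriminant and no root in $\sigma(S)$ is then a Zariski-open condition, and your constructed point satisfies it.
\end{itemize}
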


\proof
We will use the same tactic as in the proof of the main theorem of \cite{BatMRR16} by successively applying Theorem~\ref{thm.rankonepert}.
We will focus on one particular eigenvalue $\lambda\in\mathbb C$ of $S$ having the partial multiplicities
$n_1\geq\cdots\geq n_g$. We will prove the result for the case
$\mathbb F=\mathbb C$ only. The result for the case $\mathbb F=\mathbb R$
then follows analogously as in the proof of Theorem~\ref{thm.rankonepert}
by applying Lemma~\ref{lem.c2r}.

Note that in the case $\mathbb F=\mathbb C$ we may assume without loss of generality
For the remainder of the proof, we consider various cases.

{\it Case (1)}: $\lambda\neq 1,-1$. Note that by Theorem~\ref{local}, it is
sufficient to construct examples for rank-$k$ perturbations having sufficiently
small norms such that $n_{k+1}\geq n_{k+2}\geq\cdots\geq n_{g}$ are the partial multiplicities
at $\lambda$ as an eigenvalue of the perturbed matrix and such that all newly created eigenvalues are simple.
This example is easily obtained by successively applying $k$ generic rank-one perturbations to our matrix $S$ and using the
result for $k=1$ that is already available.

To be more precise, let $\varepsilon>0$.
Applying Theorem~\ref{thm.rankonepert}, there exists a vector $u_1\in\mathbb C^n\setminus\{0\}$ with norm $\|u_1\|<\frac{\varepsilon}{\sqrt{n}}$
such that the matrix $S_1:=(I_{2n}+u_1u_1^\top J)S$ has the eigenvalue $\lambda$ with partial multiplicities $n_2\geq\cdots\geq n_g$  
and such that all eigenvalues of $S_1$ that are not eigenvalues of $S$ are simple.
Then applying Theorem~\ref{thm.rankonepert} again, but this time to the matrix $S_1$, there exists a vector $u_2\in\mathbb C^n\setminus\{0\}$
with $\|u_2\|<\frac{\varepsilon}{\sqrt{n}}$ such that the matrix
\[
S_2:=(I_{2n}+u_2u_2^\top J)S_1=(I_{2n}+u_2u_2^\top J)(I_{2n}+u_1u_1^\top J)S
\]
has the eigenvalue $\lambda$ with partial multiplicities $n_3\geq\cdots\geq n_g$ (if $g=2$,
then $\lambda$ is not an eigenvalue of $S_1$) and all eigenvalues that are not eigenvalues of $S$ are simple. (To see the latter
point, a little more subtle argument is necessary, because the actual result on generic rank-one perturbations will only claim that
the eigenvalues of $S_2$ that are no eigenvalues of $S_1$ are simple. But the eigenvalues that were eigenvalues of $S_1$, but not
of $S$ were all simple, and hence the result on generic rank-one perturbations tells us that all these simple eigenvalues of $S_1$
will not be eigenvalues of $S_2$.) Then using induction, we finally find $k$ vectors $u_1,\dots,u_k\in\mathbb C^n\setminus\{0\}$
that give the desired example satisfying $\|\mat{ccc}u_1&\dots&u_k\rix\|<\varepsilon$.

{\it Case (2)}: {\it $\lambda\in\{1,-1\}$ and $n_1+\cdots+n_k$ is even}. Again, it is sufficient to construct
one example for a rank-$k$ perturbation such that $n_{k+1}\geq n_{k+2}\geq\cdots\geq n_{g}$ are the partial multiplicities
at $\lambda$ as an eigenvalue of the perturbed matrix and such that all newly created eigenvalues are simple.
This case follows mainly as in the previous case. To see this, let us assume that we have already applied $\ell-1<k$ generic rank-one
perturbation given by the vectors $u_1,\dots,u_{\ell-1}$, so that the perturbed matrix
\[
S_{\ell-1}=\left(\prod_{j=1}^{\ell-1}(I_{2n}+u_ju_j^\top J)\right)\cdot S
\]
has the partial multiplicities $n_{\ell}\geq n_{\ell+1}\geq\cdots\geq n_{g}$ at $\lambda$ with $n_1+\cdots+n_{\ell-1}$
(and hence also $n_\ell+\cdots+n_g$) being even. If $n_\ell$ is even, then we can proceed as in the previous Case~1.
Note that then the sum $n_{\ell+1}+\cdots+n_g$ is again even.
If on the other hand $n_\ell$ is odd, then by the fact that the number of partial multiplicities of a fixed odd number must
be even and by the supposed parities of the sums $n_1+\cdots+n_{\ell-1}$ and $n_\ell+\cdots+n_g$ it follows that $n_{\ell+1}=n_\ell$
and $\ell+1\leq k$. Now a generic rank-one perturbation of $S_{\ell-1}$ given by a vector $u_\ell\in\mathbb C^{2n}\setminus\{0\}$ will produce the
partial multiplicities $n_{\ell+1}+1\geq n_{\ell+2}\geq\cdots\geq n_{g}$ at $\lambda$. Since $\ell+1\leq k$, we can apply another
 (the $\ell+1$st) generic rank-one
perturbation on $S_\ell=(I_{2n}+u_{\ell}u_\ell^\top J)S_{\ell-1}$ given by a nonzero vector $u_{\ell+1}$. Since $n_{\ell+1}$ is now even, the
perturbed matrix
\[
S_{\ell+1}=(I_{2n}+u_{\ell+1}u_{\ell+1}^\top J)S_{\ell}=\left(\prod_{j=1}^{\ell+1}(I_{2n}+u_ju_j^\top J)\right)\cdot S
\]
will now have the partial multiplicities $n_{\ell+2}\geq\cdots\geq n_{g}$ at $\lambda$. Furthermore, the sum $n_{\ell+2}+\cdots+n_{g}$ is
again even so that we can proceed by induction. (In the argument we concentrated on the multiplicities of $\lambda$. Clearly, the
newly generated eigenvalues will again all be simple and the particular example
for the perturbation can be chosen to be of arbitrarily small norm.)

{\it Case (3)}: {\it $\lambda\in\{1,-1\}$ and $n_1+\cdots+n_k$ is odd}. In this case, it is not possible that the partial multiplicities
at $\lambda$ of a rank-$k$ perturbation of $S$ are $n_{k+1}\geq\cdots\geq n_g$. Indeed, from the canonical form, we know
that the algebraic multiplicity of the eigenvalue $\lambda$ must be even, and hence with $n_1+\cdots+n_k$ also $n_{k+1}+\cdots+n_g$ is odd.
Thus, the minimal possible algebraic multiplicity $a$ of $\lambda$ as an eigenvalue of a symplectic matrix $S_k$ obtained by a structure-preserving
rank-$k$ perturbation of $S$ is $a:=n_{k+1}+\cdots+n_g+1$. Let $\varepsilon>0$.
By Theorem~\ref{local} it is then sufficient to construct one particular example of
a rank-$k$ perturbation with norm smaller than $\varepsilon$
that results in a perturbed matrix with algebraic multiplicity $a$ in order to show that generically, all rank-$k$
perturbations of $S$ will have the eigenvalue $\lambda$ with algebraic multiplicity $a$. This example can be constructed as in the previous case $(2)$.
First, the fact that $n_1+\cdots+n_k$ is odd implies that $n_k=n_{k+1}$ is odd and hence $n_1+\cdots+n_{k-1}$ is even.
(In particular, this implies that $g\geq k+1$.) Thus, as in Case~(2), we can construct vectors $u_1,\dots,u_{k-1}$ such that the matrix
\[
S_{k-1}=\left(\prod_{j=1}^{k-1}(I_{2n}+u_ju_j^\top J)\right)\cdot S
\]
has the partial multiplicities $n_k\geq n_{k+1}\geq\cdots\geq n_g$ at $\lambda$. Then applying a further (generic) rank-one perturbation
on $S_{k-1}$ given by the vector $u_k$, it follows that the matrix $S_k=(I+u_ku_k^\top J)S_{k-1}$ will have the partial multiplicities
$n_{k+1}+1\geq n_{k+2}\geq\cdots\geq n_g$ at $\lambda$, and hence the algebraic multiplicity of $\lambda$ is $a=n_{k+1}+\cdots+n_g+1$. (Again the vectors $u_1,\dots,u_k$ can
be chosen in such a way that the norm of the rank-$k$ perturbation is less than
$\varepsilon$.)

It remains to show that the perturbed matrix
\[
S_k=\left(\prod_{j=1}^{k}(I_{2n}+u_ju_j^\top J)\right)\cdot S
\]
will have the partial multiplicities $n_{k+1}+1\geq n_{k+2}\geq\cdots\geq n_g$ at $\lambda$ generically with respect to $U=\mat{ccc}u_1&\dots&u_k\rix$.
To this end, we recall that by Theorem~\ref{local} the list $(\eta_1,\dots,\eta_\ell)$ of partial multiplicities of $S_k$ at $\lambda$ must dominate
the list $(n_{k+1},n_{k+2},\dots,n_g)$. Since the generic algebraic multiplicity $a$ differs only by one from the sum $n_{k-1}+\cdots+n_g$, this means
that either exactly one partial multiplicity from the list $(n_{k+1},\dots,n_g)$ must have increased by one (i.e., we have $\ell=g-k$ and $\eta_i=n_{k+i}+1$
for some $i$ and $\eta_j=n_{k+j}$ for all $j\neq i$) or an additional partial multiplicity of size one was added to the list (i.e., we have
$\ell=g-k+1$, $\eta_i=n_{k+i}$ for $i=1,\dots,g-k$ and $\eta_{g-k+1}=1$). Simultaneously, the list $(\eta_1,\dots,\eta_\ell)$ must satisfy the requirements
set by the canonical form of symplectic matrices. This rules out many lists per se - in fact there is only one particular situation in which we
have two different lists of potential partial multiplicities at $\lambda$. To illustrate this, let us present two examples.

If we consider the list $(n_{k+1},\dots,n_g)=(7,7,7,4,3,3,1,1)$, then the only list that corresponds to an algebraic multiplicity increased by one,
that dominates the list $(7,7,7,4,3,3,1,1)$ and is a possible list of partial multiplicities of $\lambda$ for a symplectic matrix is the list
$(\eta_1,\dots,\eta_\ell)=(8,7,7,4,3,3,1,1)$. Indeed, adding an extra partial multiplicity of size $1$ or adding $1$ to any of the partial multiplicities
lower than $7$ will result in a list of partial multiplicities that has an odd-sized partial multiplicity occurring an odd number of times - a fact
that is not allowed by the canonical form of symplectic matrices.

If on the other hand, we have $(n_{k+1},\dots,n_g)=(7,7,7,6,3,3,1,1)$, then there are two different dominating lists corresponding to an algebraic multiplicity
increased by one that are also compatible with a list of partial multiplicities of $\lambda$ of a symplectic
matrix, namely $(8,7,7,6,3,3,1,1)$ and $(7,7,7,7,3,3,1,1)$. In this case, we can add one either to the largest partial multiplicity $7$ or to
the second largest partial multiplicity $6$ and obtain in both cases lists where all odd partial multiplicities occur an even number of times.

Let us now formalize the observation from the previous two examples. To this end, assume that the partial multiplicity $n_{k+1}$ appears
$s$ times in the list $(n_{k+1},\dots,n_g)$, that is, we have $n_{k+1}=\cdots=n_{k+s}>n_{k+s+1}$. Since we assumed that
the sum $n_{k+1}+\cdots+n_g$ is odd, it is necessary that $s$ is odd itself. We now distinguish two subcases.

{\it Subcase (3a)}: $n_k>n_{k+s+1}+1$. We have already proved that generically the algebraic multiplicity of a symplectic matrix $S_k$
obtained by a rank-$k$ perturbation from $S$ is $a=n_{k+1}+\cdots+n_g+1$. Since $n_k>n_{k+s+1}+1$, we find that the only list
of partial multiplicities of $S_k$ at $\lambda$ that corresponds to the algebraic multiplicity $a$, that dominates $(n_{k+1},\dots,n_g)$
and that is compatible with the list of partial multiplicities at $\lambda$ of a symplectic matrix is the list
$(n_{k+1}+1,n_{k+2},\dots,n_g)$. Indeed, the partial multiplicity $n_k$ now appears the even number $s-1$ of times in the list. If any other
partial multiplicity $n_i\neq n_k$ would have been increased by one instead, then the corresponding list would have the
partial multiplicity $n_k$ the odd number $s$ of times in it which is not compatible with the Jordan structure of any symplectic matrix. This finishes the
proof in this case.

{\it Subcase (3b)}: $n_k=n_{k+s+1}+1$. As in the previous subcase, we can assume that generically the perturbed symplectic matrix $S_k$
has algebraic multiplicity $a=n_{k+1}+\cdots+n_g+1$. Now, there are two possible lists of partial multiplicities of $S_k$ at $\lambda$
that correspond to the algebraic multiplicity $a$, that dominate $(n_{k+1},\dots,n_g)$ and that are compatible with the list of partial
multiplicities at $\lambda$ of a symplectic matrix, namely $(n_{k+1}+1,n_{k+2},\dots,n_g)$ and
$(n_{k+1},\dots,n_{k+s},n_{k+s+1}+1,\dots,n_g)$. (In the first list, the partial multiplicity $n_k$ appears the even number $s-1$
of times, and in the second list $s+1$ times.) It remains to show that generically, we do have the first of the two lists.
Assume that this is not the case. Then there exists a set $\mathcal B\subseteq\mathbb C^{n,k}$ which is not contained in an
proper algebraic subset of $\mathbb C^{n,k}$ such that for all $(u_1,\dots,u_k)\in\mathcal B$ we have that the matrix
\[
S_k=\left(\prod_{j=1}^{k}(I_{2n}+u_ju_j^\top J)\right)\cdot S
\]
has at $\lambda$ the partial multiplicities $(n_{k+1},\dots,n_{k+s},n_{k+s+1}+1,\dots,n_g)$. Now let us apply a further symplectic
rank-one perturbation on $S_k$. It follows that for any $u_{k+1}\in\mathbb C^n$ the algebraic multiplicity of
\[
S_{k+1}=(I_{2n}+u_{k+1}u_{k+1}^\top J)S_k
\]
can decrease at most by $n_{k+1}$. Thus, we obtain that for all $(u_1,\dots,u_{k+1})\in\mathcal B\times\mathbb C^n$ the
algebraic multiplicity of
\[
S_{k+1}=(I_{2n}+u_{k+1}u_{k+1}^\top J)S_k=\left(\prod_{j=1}^{k+1}(I_{2n}+u_ju_j^\top J)\right)\cdot S
\]
at $\lambda$ is at least $a-n_{k+1}=n_{k+2}+\cdots+n_g+1$. As the set $\mathcal B\times\mathbb C^n$ is not contained in an
algebraic set by Lemma~\ref{notalg} we arrive at a contradiction, because by the already proved Case~2 we find that generically
the algebraic multiplicity of $S_{k+1}$ is $n_{k+2}+\cdots+n_g$. This proves the genericity of the partial multiplicities
$n_{k+1}+1\geq n_{k+2}\geq\cdots\geq n_g$ of $S_{k+1}$ at $\lambda$. The simplicity of the newly generated eigenvalues is
proved analogously to the previous cases.
\eproof

\section*{Conclusions}

In this paper a missing result in the theory of low-rank perturbations of matrices has been provided. This
result describes the generic behavior of the Jordan structure of real and complex symplectic matrices that are subject to
low-rank structure-preserving transformations.

\section*{Acknowledgement}

The author would like to thank Julio Moro and Fredy Sosa for many fruitful discussions on the topic of the paper.

\end{document}